\documentclass[11pt,letterpaper]{article}

\usepackage[T1]{fontenc}
\usepackage{lmodern}
\usepackage{microtype}
\usepackage{amsmath,amssymb,amsthm,mathtools}
\usepackage[margin=1.03in]{geometry}
\usepackage{graphicx}
\usepackage{xcolor}
\usepackage{float}
\usepackage{placeins}
\usepackage{tikz}
\usetikzlibrary{arrows.meta,calc,positioning}
\definecolor{DarkTurquoise}{HTML}{006D77}
\definecolor{ReferenceViolet}{HTML}{6F2DA8}
\usepackage[
  backend=biber,
  style=alphabetic,
  sorting=nyt,
  giveninits=true,
  maxbibnames=99,
  doi=false,
  isbn=false,
  url=false,
  eprint=false
]{biblatex}
\AtBeginBibliography{\setlength{\emergencystretch}{2em}}
\usepackage{xurl}
\usepackage{hyperref}
\hypersetup{
  colorlinks=true,
  linkcolor=DarkTurquoise,
  citecolor=ReferenceViolet,
  urlcolor=DarkTurquoise,
  pdfauthor={Pablo Portilla Cuadrado},
  pdftitle={A reduction theorem for Lê's conjecture}
}
\usepackage{aliascnt}
\usepackage[capitalise,nameinlink,noabbrev]{cleveref}

\numberwithin{equation}{section}
\makeatletter
\let\c@figure\@undefined
\makeatother
\newaliascnt{figure}{equation}
\newaliascnt{theorem}{equation}
\newtheorem{theorem}[theorem]{Theorem}
\aliascntresetthe{theorem}
\newaliascnt{proposition}{equation}
\newtheorem{proposition}[proposition]{Proposition}
\aliascntresetthe{proposition}
\newaliascnt{lemma}{equation}
\newtheorem{lemma}[lemma]{Lemma}
\aliascntresetthe{lemma}
\newaliascnt{corollary}{equation}
\newtheorem{corollary}[corollary]{Corollary}
\aliascntresetthe{corollary}
\theoremstyle{definition}
\newaliascnt{definition}{equation}
\newtheorem{definition}[definition]{Definition}
\aliascntresetthe{definition}
\newaliascnt{notation}{equation}
\newtheorem{notation}[notation]{Notation}
\aliascntresetthe{notation}
\crefname{notation}{notation}{notations}
\Crefname{notation}{Notation}{Notations}
\newaliascnt{remark}{equation}
\theoremstyle{remark}
\newtheorem{remark}[remark]{Remark}
\aliascntresetthe{remark}
\theoremstyle{plain}

\newcommand{\C}{\mathbb{C}}
\newcommand{\ord}{\operatorname{ord}}
\newcommand{\mult}{\operatorname{mult}}

\title{A reduction theorem for L\^e's conjecture}
\author{Pablo Portilla Cuadrado}
\date{24 August 2026}

\AtBeginDocument{
  \setlength{\abovedisplayskip}{9pt plus 3pt minus 6pt}
  \setlength{\belowdisplayskip}{9pt plus 3pt minus 6pt}
}

\begin{document}
\maketitle

\begin{abstract}
 Let
$\boldsymbol{n}=(n_1,n_2,n_3):(\C^2,0)\to(\C^3,0)$ be a holomorphic map
germ admitting an injective representative.  We prove that
if $(d\boldsymbol{n})_0=0$, then
$\ord_0(\boldsymbol{n})\in\{2,3,4\}$.  This reduces L\^e's conjecture to
excluding potential counterexamples $\boldsymbol{n}$ of orders $2$, $3$, and $4$. The proof
combines techniques from the theory of plane curve singularities, explicit
cobordism constructions, and genus bounds obtained by applying properties of the $\Upsilon$ invariant
coming from knot Floer homology.
\end{abstract}

\section{Introduction}

\subsection*{L\^e's conjecture and the Reduction Theorem}

Let $(X,0)\subset(\C^N,0)$ be a reduced surface germ, where $N\geq2$. For
$0<\epsilon$ sufficiently small, let
$
 L_X=X\cap \mathbb{S}^{2N-1}_\epsilon
$
be its link.
Mumford \cite{Mumford} proved that a normal surface germ is smooth if and only if
its link is simply connected, or equivalently, homeomorphic to $\mathbb{S}^3$.
Nonnormal surface germs behave differently.
For example,   $\{z^2-x^3=0\}\subset\C^3$ is a non smooth constant family
of cusps and has link homeomorphic to $\mathbb{S}^3$.

L\^e conjectured that if the link of a reduced surface germ
$(X,0)\subset(\C^3,0)$ is homeomorphic to $\mathbb{S}^3$, then this is always the case, that is, $(X,0)$ is
the total space of an equisingular deformation of an irreducible plane
curve singularity. Here equisingularity means that the embedded topological
type of the fibers is constant. This problem was recorded in the
Proceedings of the 1982 Plans-sur-Bex seminar
\cite[p.~254]{Weber}.

There are several equivalent reformulations of the problem. One can show that the link is homeomorphic to
$\mathbb{S}^3$ if and only if the normalization is a bijection from a smooth
surface germ. After identifying this germ with $(\C^2,0)$, the
conjecture is equivalent to saying that every holomorphic map
germ
\[
 \boldsymbol n:(\C^2,0)\longrightarrow(\C^3,0)
\]
which admits an injective representative, satisfies
$
 \operatorname{rank}(d\boldsymbol n)_0\geq1.
$
These reformulations are discussed in \cite[Fact~1 and the discussion on p.~346]{Bobadilla}.

Let$(n_1,n_2,n_3)$ be the coordinate functions of $\boldsymbol n$. The order of $\boldsymbol n$ is defined by
$
\ord_0(\boldsymbol n)
 =\min_{1\leq i\leq3}\ord_0(n_i).
$
Our main result uses this formulation and excludes every possible counterexample to the conjecture with  $ \ord_0(\boldsymbol n) \geq 5$. More concretely:

\begin{theorem}
\label{thm:reduction}
Let
$\boldsymbol n:(\C^2,0)\to(\C^3,0)$ be a holomorphic map germ
that admits an injective representative. If
$(d\boldsymbol n)_0=0$, then
$
\ord_0(\boldsymbol n)\in\{2,3,4\}.
$
\end{theorem}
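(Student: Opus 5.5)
Since $(d\boldsymbol n)_0=0$, every $n_i$ has order at least $2$, so the statement amounts to ruling out $\ord_0(\boldsymbol n)=d\geq 5$. I argue by contradiction: assume $\boldsymbol n$ is injective with $d\geq5$ and produce incompatible topological constraints. The setup is as follows. Choose a generic linear projection $\ell:\C^3\to\C$ and consider the plane curve germ $C=\{\ell\circ\boldsymbol n=0\}\subset(\C^2,0)$; generically it has multiplicity exactly $d$. Injectivity of $\boldsymbol n$ means the image surface $X=\boldsymbol n(\C^2)$ has link $\mathbb S^3$, and the hyperplane section $X\cap\{\ell=0\}$ is a space curve whose normalization is $C$. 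Using a generic plane projection $\pi:\C^3\to\C^2$ with $\pi$ adapted to $\ell$, the composition $\pi\circ\boldsymbol n$ is a finite map germ $(\C^2,0)\to(\C^2,0)$ of degree at least $d^2$ (the product of the orders of the two components). Its discriminant $\Delta$ is a plane curve, and injectivity of $\boldsymbol n$ forces the double-point curve of $\pi|_X$ to be controlled.

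The key step translates injectivity into a cobordism statement. The link $K$ of the generic fiber of $\ell\circ\boldsymbol n$ is an algebraic knot or link in $\mathbb S^3$ of multiplicity $d$. On the other hand, the knot $K'=\boldsymbol n^{-1}(L)$ is the preimage under $\boldsymbol n$ of the link $L$ of a generic hyperplane section of $X$. Because $\boldsymbol n$ is a homeomorphism onto $X$ and $X$ has link $\mathbb S^3$, the family $\ell=t$ gives an explicit cobordism in $\mathbb S^3\times[0,1]$. This cobordism connects $K$, a Milnor fiber boundary of large multiplicity, to the link of an equisingular-type deformation whose Milnor fibre genus is computable from the double point set. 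Its genus is bounded above by $\delta$-invariant data of the space curve. I would make this explicit by smoothing the double-point curve of $\boldsymbol n$ in a generic fiber, building the surface by adding one band per transverse double point. This gives $g_4$-type cobordism bounds of the shape $|g(K)-g(K'')|\le$ (number of bands)$/2$.

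The main obstacle is getting a lower bound strong enough to contradict this once $d\ge5$. Slice genus alone gives nothing for algebraic knots, which satisfy $g_4=g$. So I would apply $\Upsilon_K(t)$, which is Lipschitz along concordance with $|\Upsilon_K(t)-\Upsilon_{K'}(t)|\le t\,g(\Sigma)$ for a cobordism $\Sigma$. For algebraic knots it is computable from the semigroup (Borodzik–Livingston), and its value at $t$ near $2/d$ detects the multiplicity $d$ through the jumps of $\Upsilon$. Evaluating at $t=2/d$ should give a discrepancy growing like $d^2/4$ minus a linear correction. The cobordism genus grows only linearly in $d$ because of the double-point count forced by injectivity. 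These two estimates are incompatible precisely for $d\ge5$, with the small cases $2,3,4$ surviving as in the statement. I expect most of the work to lie in the semigroup bookkeeping needed to bound $\Upsilon$ uniformly over all plane branches of multiplicity $d$. That bookkeeping also has to control the dependence on the higher Puiseux pairs, which I would handle by reducing via a further cobordism to the first Puiseux pair.
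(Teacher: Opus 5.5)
You have the right toolkit in view: a generic linear section, a cobordism in $\mathbb S^3\times[0,1]$ coming from the family $\{\ell=t\}$, and $\Upsilon$ as a cobordism-Lipschitz invariant that is linear on $[0,2/\text{multiplicity}]$ for algebraic knots. But the geometry you feed into it is misidentified, and the inequality that actually produces the threshold is missing.

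\textbf{There are no double points.} Because $\boldsymbol n$ is injective, the singular points of a nearby section $X\cap\{\ell=t\}$ are the finitely many points of $\Sigma\cap\{\ell=t\}$, where $\Sigma=\operatorname{Sing}(X)$. Each is a \emph{unibranch} singular plane germ, since $\boldsymbol n$ restricted to $\{\ell\circ\boldsymbol n=t\}$ is its normalization. So ``one band per transverse double point'' describes nothing here.

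The usable cobordism is built as follows. Remove small Milnor balls around these points and join them by a tree of $1$-handles into a single $4$-ball, whose boundary knot is the connected sum $K_{\mathrm{tr}}$ of their links; unibranchness is essential for this. Take the rest of the nearby section out to the outer sphere, where the boundary link is $\Lambda_W$. Then close $\Lambda_W$ up to $T(m_X,m_X+1)$, with $m_X=\mult_0(X)$, by a positive braid cobordism. The genus is $\delta(V,0)+\delta(W,0)-m_X(m_X-1)/2$, with $V=C$ and $W=X\cap\{\ell=0\}$. This is not linear in $d$, since already $\delta(V,0)\geq d(d-1)/2$, so your claimed linear growth is unfounded. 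Also, $C$ is singular, so it is not the normalization of $X\cap\{\ell=0\}$ (it is only homeomorphic to it), and the link of $C$ is not an end of the relevant cobordism.

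\textbf{Evaluating at $2/d$ cannot work, and an upper bound is missing.} The knots whose $\Upsilon$ you must control are the transverse knots $K_{Q_j}$, and their multiplicities $m_i$ have nothing to do with $d$ a priori. The right point is $\tau_0=2/m_{\mathrm{tr}}$ with $m_{\mathrm{tr}}=\max_i m_i$. There Wang's theorem gives $\Upsilon_{K_{Q_j}}(\tau_0)=-g_3(K_{Q_j})\tau_0$ for all $j$ simultaneously. Together with the explicit $\Upsilon$ of $T(m_X,m_X+1)$, this yields only a \emph{lower} bound $m_X+M\leq 3m_{\mathrm{tr}}$, where $M=m_X-2\delta(V,0)$.

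A contradiction needs an independent \emph{upper} bound on $m_{\mathrm{tr}}$, which your sketch does not have. Apply Riemann--Hurwitz to a second generic projection $u=\lambda_2\circ\boldsymbol n$ on the Milnor fibre $V_s$ of $v=\ell\circ\boldsymbol n$. This is a branched cover of degree $m_X$. It ramifies with index at least $m_i$ at each of the $i_0(v,\Gamma_i)\geq d$ points of $V_s\cap\Gamma_i$, where $\Gamma_i=\boldsymbol n^{-1}(\Sigma_i)$. This gives $m_{\mathrm{tr}}\leq 1+(2m_X-M-b)/d$, with $b$ the number of branches of $C$.

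You also need the cluster identities $m_X=\sum_p m_p^2$ and $M=\sum_p m_p$, where $m_p$ are the multiplicities at the base points $p$ of the ideal $(n_1,n_2,n_3)$. They give $d\leq M$ and $m_X\leq dM$. Only then do the two bounds on $m_{\mathrm{tr}}$ combine to $(d-6)m_X+(d+3)M+3b-3d\leq 0$. This is impossible for $d\geq 6$, and, using $m_X\leq dM$, for $d=5$.

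Your statement that the estimates are ``incompatible precisely for $d\geq5$'' is asserted rather than derived. No reduction to the first Puiseux pair is needed, since Wang's result covers all plane branches of a given multiplicity uniformly.
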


We recall that $(d\boldsymbol n)_0=0$ if and only if
$\ord_0(\boldsymbol n)\geq2$. So the conjecture is reduced to excluding
counterexamples of orders $2$, $3$, and $4$. 
\subsection*{Earlier approaches}

Our setting is similar to that of N\'emethi
\cite{Nemethi}. Let
$f:(\C^2,0)\to(\C^3,0)$ be an injective germ with reduced image germ
$(X,0)$. Let $w_1,w_2,w_3$ be coordinates in $(\C^3,0)$, with $f_i=w_i\circ f$,
so that
\[
 V_0=f_1^{-1}(0) \quad \text{ and } \quad W_0=X\cap\{w_1=0\}
\]
are reduced plane curve germs. N\'emethi compared these germs and
studied the Milnor fibers $V_t=f_1^{-1}(t)$ and their
(singular) images by the map $(f_2,f_3)$. Under an additional analytic hypothesis on $X$,
he used versal deformations to argue that the image of the vanishing cycles of $V_t$
and the vanishing cycles at the singularities of $W_t$ form
two orthogonal parts of a geometric basis of vanishing cycles for $W_0$ which would contradict the fact that Dynking diagrams associated with distinguished bases are connected.
Keilen and Mond \cite{KeilenMond} found a gap in this argument: for the injective map
$f(x,y)=(y^3+x^2,x,y^2)$, the parameter curve in the base of the versal deformation lies in the discriminant and cannot be perturbed as required by N\'emethi's construction.

Luengo and Pichon \cite[Theorem~5.1]{LuengoPichon} proved the conjecture for cyclic covers of normal
surface germs totally branched over the zero locus of a holomorphic
function. Their proof uses an explicit description of the
minimal Waldhausen decomposition of the link. 
Fern\'andez de Bobadilla \cite[Theorem~3 and Conjecture~6]{Bobadilla}
proved the conjecture in the case when the image surface contains a smooth curve passing
through the origin. He also reformulated the problem in terms of
plane curve singularities and their relative polar curves. Much of the setting developed in \cite{Bobadilla} is recalled and used here, although we don't use his plane curve reformulation.
Pe\~nafort Sanchis \cite[Proposition~4.3]{PenafortSanchis}
(Corollary~4.4 in arXiv:1609.03222v3) proved the conjecture for
reflection maps by establishing a corank bound.
Reflection maps are compositions of embeddings with orbit
maps of finite complex reflection groups. In  \cite{Hepler}, Hepler used the language of perverse
sheaves to express bijectivity of the normalization in terms of the
vanishing of the comparison complex. And phrased L\^e's conjecture
via the vanishing cycles of a generic linear form. He also
recovered the case in which the image contains a smooth curve.

\subsection*{Strategy and organization of the proof}

The proof has three parts, corresponding to
\Cref{sec:plane-sections,sec:knot-cobordisms,sec:numerical-estimates}. Assume $(d\boldsymbol n)_0=0$. Let $X$ be a small representative of the reduced image germ of
$\boldsymbol n$, and let $\Sigma=\operatorname{Sing}(X)$ be its singular locus which is a complex curve.
In \Cref{sec:plane-sections}, we choose a generic linear form
$\lambda_1:\C^3\to\C$ satisfying certain properties. On the Milnor representatives fixed in
\Cref{subsec:general-plane-sections}, we put
\[
 v=\lambda_1\circ\boldsymbol n,\qquad
 V_s=\{v=s\},\qquad
 W_s=X\cap\{\lambda_1=s\}.
\]
Write $V=V_0$ and $W=W_0$ for this discussion. For $s\neq0$ small enough,
let
\[
 \operatorname{Sing}(W_s)=\{Q_1(s),\ldots,Q_N(s)\}
\]
be the singular points of the nearby fibers of $\lambda_1|_{X}$ which satisfy $\operatorname{Sing}(W_s)=\Sigma \cap W_s$. We set $\boldsymbol n_s=\boldsymbol n|_{V_s}$.
Then \Cref{prop:plane-sections} identifies
$\boldsymbol n_s:V_s\to W_s$ as the normalization map. The singular
germs $(W_s,Q_j(s))$ are the transverse plane branches of $X$
described before \Cref{def:transverse-type}.
The Milnor number identity in
\cite[equation~(4)]{Bobadilla} yields the analogous equation in terms of the delta invariants (see \eqref{eq:delta-specialization}):
\[
 \delta(W,0)=\delta(V,0)+
 \sum_{j=1}^N\delta\bigl(W_s,Q_j(s)\bigr).
\]
Let $b$ be the number of branches of $V$ (and also of $W$), and set
\[
 m_{\boldsymbol n}=\ord_0(\boldsymbol n),\qquad
 m_X=\mult_0(X),\qquad
 M_{\boldsymbol n}=m_X-2\delta(V,0).
\]
Denote by
$\operatorname{Sing}(X)=\bigcup_{i=1}^c\Sigma_i$ the decomposition of the singular locus, and let
$\Gamma_i=(\boldsymbol n^{-1}(\Sigma_i))_{\mathrm{red}}$. Let
$m_i$ be the multiplicity of the transverse type along $\Sigma_i$.
We define $m_{\mathrm{tr}}=\max_i m_i\geq2$. This section gives the inequalities
\[
 \sum_{i=1}^c(m_i-1)i_0(v,\Gamma_i)
 \leq 2m_X-M_{\boldsymbol n}-b,\quad
 m_{\boldsymbol n}\leq M_{\boldsymbol n}, \quad \text{ and } \quad
 m_X\leq m_{\boldsymbol n}M_{\boldsymbol n}.
\]
Where $i_0(v,\Gamma_i)$ denotes the local intersection multiplicity at $0$ of $\{v=0\}$ anbd $\Gamma_i$.
These estimates are proven in
\Cref{prop:polar-estimate,prop:plane-estimates}.

In \Cref{sec:knot-cobordisms} we view the counterpart of the identity between delta numbers in the world of knot
cobordisms with controlled genus. Fix a small $s_0\neq0$, put
$Q_j=Q_j(s_0)$, and let $K_{Q_j}$ be the knot of the transverse
plane branch $(W_{s_0},Q_j)$. Let $K_{\mathrm{tr}}$ be the connected sum of the knots $(W_{s_0},Q_j)$, and let $\Lambda_W$ be the link of $W$.
In \Cref{prop:exterior}, we construct a link cobordism from $K_{\mathrm{tr}}$ to
$\Lambda_W$, using the boundary trivialization of the nearby sections and an embedded realization of $K_{\mathrm{tr}}$. In \Cref{prop:braid} we  construct a link cobordism (via a braid construction) from $\Lambda_W$ to the torus knot
$T(m_X,m_X+1)$. Gluing these surfaces, that is, composing the two cobordisms, we get another knot cobordism $F$
from $(\mathbb{S}^3,K_{\mathrm{tr}})$ to
$(\mathbb{S}^3,T(m_X,m_X+1))$
in a manifold diffeomorphic to $\mathbb{S}^3\times[0,1]$, with
\[
 g(F)=\delta(V,0)+\delta(W,0)-\frac{m_X(m_X-1)}2,
 \quad \text{ and } \quad
 g_3(K_{\mathrm{tr}})=\delta(W,0)-\delta(V,0).
\]
Where $g_3(K)$ denotes Seifert genus of the knot $K$ (see \Cref{cor:genus-identities}).

Finally, in \Cref{sec:numerical-estimates} we apply knot Floer invariants to this
cobordism. Let $\Upsilon$ be the invariant of Ozsv\'ath, Stipsicz, and Szab\'o
\cite[Section~3]{OSS}. For the function
\[
 \Phi_K(\tau)=\Upsilon_K(\tau)+g_3(K)\tau,
\]
the bound of Ozsv\'ath, Stipsicz, and Szab\'o in
\cite[Theorem~1.11]{OSS} gives
\[
 \Phi_{T(m_X,m_X+1)}(\tau)
 \leq\sum_{j=1}^N\Phi_{K_{Q_j}}(\tau)
      +(m_X-M_{\boldsymbol n})\tau,
 \qquad\text{for }\tau\in[0,1]\cap\mathbb Q.
\]
The shift in \eqref{eq:phi-definition} is inspired by Tru\"ol
\cite[Remark~4.14]{Truol}.
The comparison follows from a single application of
\Cref{lem:phi-cobordism}, as shown in \Cref{cor:upsilon-comparison}. Then, we evaluate this comparison at
$\tau_0=2/m_{\mathrm{tr}}$. Wang's theorem
\cite[Theorem~8]{Wang}
(Theorem~5 in arXiv:1505.06835v1) gives
$\Phi_{K_{Q_j}}(\tau_0)=0$ for every $1\leq j\leq N$.
This, together with an explicit formula \eqref{eq:torus-phi} for the $\Upsilon$ invariant for the torus knot gives, the inequalities 
\[
 M_{\boldsymbol n}\leq m_{\mathrm{tr}} \quad \text{ and } \quad
 m_X+M_{\boldsymbol n}\leq3m_{\mathrm{tr}}.
\]
These lower bounds on $m_{\mathrm{tr}}$ are proved in
\Cref{thm:estimates}. Combining the second with the upper bound from
\Cref{prop:polar-estimate} gives
\[
 \frac{m_X+M_{\boldsymbol n}}{3}
 \leq m_{\mathrm{tr}}
 \leq 1+\frac{2m_X-M_{\boldsymbol n}-b}{m_{\boldsymbol n}}.
\]
Comparing the two bounds and using $m_{\boldsymbol n}\leq M_{\boldsymbol n}$
and $b\geq1$ excludes $m_{\boldsymbol n}\geq6$.
The case $m_{\boldsymbol n}=5$ also requires the resolution inequality
$m_X\leq m_{\boldsymbol n}M_{\boldsymbol n}$.
This proves \Cref{thm:reduction}.

\subsection*{AI disclosure}

The principal mathematical ideas and proof strategies of this paper
were conceived and developed by the author. ChatGPT 5.6 assisted with bibliographic
searches, general discussions, and revising the writing and
presentation. The author takes responsibility for all the arguments in the final manuscript.

A first draft of this manuscript contained an argument for the case
$m_{\boldsymbol n}=4$, also by the author. When that draft was
handed to ChatGPT, it found a flaw in the argument.
After several iterations and a final long session,
ChatGPT found a fix for the case $m_{\boldsymbol n}=4$ and proved the conjecture, also, in the two remaining cases, $m_{\boldsymbol n}=2,3$. These cases will appear in another paper with a unified argument shortly after this one.
In that other paper the author has played
mainly the role of digesting, verifying, and clarifying the arguments,
and removing unnecessary references, reducing substantially the original manuscript produced by AI. However, the author has minor intellectual contribution to that work. This, together with the fact that the methods used there have a completely different flavor is the reason why that part of the work will appear in a different article.

\section{General plane sections and numerical estimates}
\label{sec:plane-sections}

Let $\boldsymbol{n}=(n_1,n_2,n_3):(\C^2,0)\to(\C^3,0)$ be a
holomorphic germ with an injective representative, and let
$(X,0)\subset(\C^3,0)$ be its reduced image germ which is well defined by injectivity. Choose a
representative $X\subset\C^3$ of $(X,0)$ and
\textbf{fix once and for all a compatible injective representative
$\boldsymbol n$.}
By abuse of notation we denote the representative $\boldsymbol n$ by the same symbol as the map germ. Set
\begin{equation}
 m_{\boldsymbol{n}}=\ord_0(\boldsymbol{n})
 =\min_{1\leq i\leq3}\ord_0(n_i).
 \label{eq:order-n}
\end{equation}
For a generic hyperplane $H \subset \C^3$ through the origin, one has
\begin{equation}
 \mult_0\bigl(\boldsymbol{n}^{-1}(H \cap X)\bigr)=m_{\boldsymbol{n}}.
 \label{eq:general-section-order}
\end{equation}
Indeed, if $H=\{\lambda=0\}$, then
$\boldsymbol{n}^{-1}(H \cap X)$ is defined by
$\lambda\circ\boldsymbol{n}$, and for general $\lambda$ the
 homogeneous part of this function of degree $m_{\boldsymbol{n}}$ is
not zero, so this curve germ has also multiplicity $m_{\boldsymbol{n}}$.

Let $\mathfrak m=(x,y)\subset\C\{x,y\}$ be the maximal ideal.
The map $\boldsymbol{n}:(\C^2,0)\to(X,0)$ is the finite bijective
normalization of the surface (see
\cite[Fact~1 and the discussion on p.~346]{Bobadilla}). By the definition of the differential (or the contangent map), we have
\begin{equation}
 (d\boldsymbol{n})_0=0\quad\Longleftrightarrow\quad
 n_i\in\mathfrak m^2\ \text{for every }i.
 \label{eq:differential}
\end{equation}
\textbf{We assume from now on that $\boldsymbol{(d\boldsymbol n)_0=0}$.} The singular locus
$
 \Sigma=\operatorname{Sing}(X)
$
is a curve germ at the origin.
Otherwise, since $X$ is a reduced surface hypersurface with at most an
isolated singularity, it would be normal by
\cite[Theorem~1.96(1) and Corollary~B.8.10]{GreuelLossenShustin}.
So its normalization $\boldsymbol n$ would then be an isomorphism from a smooth space,
contradicting $(d\boldsymbol n)_0=0$.
We endow $\Sigma$ with its reduced analytic structure.

\subsection{General plane sections}
\label{subsec:general-plane-sections}

In this section we choose a linear form
$\lambda_1:\C^3\to\C$ which satisfies some genericity conditions slightly stronger than those of
Fern\'andez de Bobadilla \cite[pp.~347--348]{Bobadilla}. We later choose a second linear form
$\lambda_2:\C^3\to\C$, linearly independent of $\lambda_1$,
satisfying another genericity condition. These choices are crucial to prove \Cref{prop:polar-estimate} and the inequalities
\eqref{eq:pointbounds}. 

\begin{notation}
\label{not:principalization}
The ideal
\[
I=(n_1,n_2,n_3)\subset\C\{x,y\}
\]
is $\mathfrak m$-primary because $\boldsymbol n^{-1}(0)=\{0\}$.
Let $\pi$ be the principalization of the ideal which is obtained
by point blowups (see for example Wall's construction
\cite[proof of Proposition~11.1.3(iv)]{Wall}).
So, at each step we blow up the points at which the weak transform
of $I$ is proper, and we stop when this transform is the unit ideal.
Let $\mathcal C$ be the whole set of centers.
For $p\in\mathcal C$, write
$I_p=(n_{1,p},n_{2,p},n_{3,p})$ for the weak transform of $I$ at
$p$, and set
\[
 m_{\boldsymbol n,p}
 =\ord_p(I_p)
 =\min_{1\leq i\leq3}\ord_p(n_{i,p}).
\]
By construction, the set $\mathcal C$, with the multiplicities
$m_{\boldsymbol n,p}$, is the {\em cluster of base points of $I$} (in the sense of 
\cite[pp.~253--254]{CasasAlvero}). For a nonzero linear form
$\lambda_1$, we set
\[
\begin{aligned}
	v&=\lambda_1\circ\boldsymbol n,
	& H_0&=\{\lambda_1=0\},\\
	(V,0)&=(\{v=0\},0),
	& (W,0)&=(X\cap H_0,0).
\end{aligned}
\]
Here $V$ and $W$ denote the central curve germs. For $p\in\mathcal C$, let $V_p$ denote the strict transform of
$V$ on the surface on which $p$ is a center.
\end{notation}
Next, we  show that we can choose a linear form $\lambda_1$ such that the following properties hold:
\begin{enumerate}
\setlength{\itemsep}{2pt}
\setlength{\parsep}{0pt}
\renewcommand{\theenumi}{CA}
\renewcommand{\labelenumi}{\makebox[\labelwidth][l]{(\theenumi)}}
\item\label{cond:casas-alvero}
The curve germ $(V,0)$ is reduced and
$\mult_p(V_p)=m_{\boldsymbol n,p}$ for every $p\in\mathcal C$.
The principalization $\pi$ is also an embedded resolution of $V$
(not necessarily a minimal one).
\setcounter{enumi}{0}
\renewcommand{\theenumi}{G\arabic{enumi}}
\item\label{cond:section-multiplicity}
 We have $\mult_0(W)=\mult_0(X)$.
\item\label{cond:isolated-critical-point}
The map germ $v:(\C^2,0)\to(\C,0)$ has an isolated critical
point.
\item\label{cond:singular-locus-section}
The form $\lambda_1$ does not vanish identically on any irreducible
component of $(\Sigma,0)$.
\item\label{cond:section-reduced}
The plane curve germ $(W,0)$ is reduced.
\end{enumerate}

\begin{remark}
\label{rem:generality-properties}
These conditions above are stated here in the form in which we use them in the text, but it is enough to impose
(\ref{cond:casas-alvero}) and
(\ref{cond:section-multiplicity}), since
(\ref{cond:casas-alvero}) implies
(\ref{cond:isolated-critical-point}),
(\ref{cond:singular-locus-section}) and (\ref{cond:section-reduced}) as we see now:
\begin{list}{}{%
\setlength{\leftmargin}{0pt}%
\setlength{\rightmargin}{0pt}%
\setlength{\itemindent}{0pt}%
\setlength{\listparindent}{0pt}%
\setlength{\labelwidth}{0pt}%
\setlength{\labelsep}{0pt}%
\setlength{\topsep}{3pt}%
\setlength{\itemsep}{3pt}%
\setlength{\parsep}{0pt}%
}
\item \textbullet\quad
$(\ref{cond:isolated-critical-point})$.
The assumption that
$(d\boldsymbol n)_0=0$, gives $dv_0=0$. So the
origin is a critical point of $v$ and it is isolated since $(V,0)$ is reduced.
\item \textbullet\quad
$(\ref{cond:singular-locus-section})$.
Suppose that $\lambda_1$ is $0$ on a whole component
$\Sigma_i$ of $\Sigma$. At a point
$p\in\Gamma_i=\boldsymbol n^{-1}(\Sigma_i)$ sufficiently close to
the origin, and with $p\ne0$,
the restriction $\boldsymbol n|_{\Gamma_i}$ is a local biholomorphism
onto $\Sigma_i$. In particular,
$(d\boldsymbol n)_p(T_p\Gamma_i)=T_{\boldsymbol n(p)}\Sigma_i$.
The differential has rank one. Indeed, it can't have rank two since it would make $X$ smooth
at $\boldsymbol n(p)$). Hence
$
 \operatorname{im}(d\boldsymbol n)_p
 =T_{\boldsymbol n(p)}\Sigma_i\subset\ker d\lambda_1
$
and $dv_p=d\lambda_1\circ(d\boldsymbol n)_p=0$,
contradicting
(\ref{cond:isolated-critical-point}).
\item \textbullet\quad
$(\ref{cond:section-reduced})$. Property~(\ref{cond:singular-locus-section})
gives $W\cap\Sigma=\{0\}$ for representatives small enough.
Since $\boldsymbol n$ is biholomorphic over $X\setminus\Sigma$,
it induces a biholomorphism
$V\setminus\{0\}\to W\setminus\{0\}$.
And so $W$ is reduced away from the origin since $V$ is reduced. A repeated irreducible
factor in a defining equation of $W$ would make it nonreduced along
an entire branch, so $W$ is reduced.
\end{list}
\end{remark}

Fern\'andez de Bobadilla \cite[pp.~347--348]{Bobadilla} also
chooses a generic linear form  $x$, playing the role
of our $\lambda_1$. His construction has properties
(\ref{cond:isolated-critical-point}),
(\ref{cond:singular-locus-section}) and (\ref{cond:section-reduced}).
The isolated singularity of $x\circ\boldsymbol n$
is stated explicitly, and the other properties are used in the
construction of the sections and in formula~(2) there.
We prove in the following lemma that we can additionally impose
(\ref{cond:casas-alvero}) and (\ref{cond:section-multiplicity})
and still remain in a nonempty Zariski open set.
We write $[\lambda]$ for the class of a nonzero linear form
$\lambda$ in $\mathbb P(\check{\C}^3)$, where
$\check{\C}^3$ denotes the dual of $\C^3$.

\begin{lemma}
\label{lem:general-linear-forms}
There is a nonempty Zariski open set
$\mathcal U\subset\mathbb P(\check{\C}^3)$, such that every linear
form $\lambda_1$ with $[\lambda_1]\in\mathcal U$ satisfies 
(\ref{cond:casas-alvero}) and
(\ref{cond:section-multiplicity})--(\ref{cond:section-reduced}).
\end{lemma}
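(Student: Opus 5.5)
By \Cref{rem:generality-properties}, (\ref{cond:casas-alvero}) implies (\ref{cond:isolated-critical-point}), (\ref{cond:singular-locus-section}) and (\ref{cond:section-reduced}). So it suffices to produce two nonempty Zariski open sets $\mathcal U_1,\mathcal U_2\subset\mathbb P(\check{\C}^3)$, one on which (\ref{cond:casas-alvero}) holds and one on which (\ref{cond:section-multiplicity}) holds. Then I take $\mathcal U=\mathcal U_1\cap\mathcal U_2$, which is nonempty because $\mathbb P(\check{\C}^3)$ is irreducible.

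For (\ref{cond:section-multiplicity}) I use the tangent cone. Write $X=\{h=0\}$ with $h$ reduced, and let $h_m$ be its initial form, where $m=\mult_0(X)$. For a plane $H_0=\{\lambda_1=0\}$, the curve $X\cap H_0$ is defined by $h|_{H_0}$. Its order is $m$ exactly when $h_m|_{H_0}\not\equiv0$, that is, when the linear form $\lambda_1$ does not divide $h_m$. Only finitely many classes $[\lambda]$ divide $h_m$, so the complement of that finite set is the required Zariski open set $\mathcal U_2$.

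For (\ref{cond:casas-alvero}) I use the general-element theorem for ideals in the sense of Casas-Alvero \cite[pp.~253--254]{CasasAlvero}. The elements $\lambda\circ\boldsymbol n=\sum a_in_i$ with $[a]\in\mathbb P^2$ form the linear system generated by $n_1,n_2,n_3$. Its cluster of base points is $\mathcal C$ with virtual multiplicities $m_{\boldsymbol n,p}$.

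I would prove the needed genericity by induction along the blowups in $\pi$, giving a Bertini-type argument. At a center $p$, the weak transforms satisfy $\ord_p(\sum a_in_{i,p})=m_{\boldsymbol n,p}$ outside a proper linear subspace of $[a]$. This is the condition that the degree-$m_{\boldsymbol n,p}$ initial forms of $\sum a_i n_{i,p}$ do not vanish, and it is a nonempty open condition.

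\begin{itemize}
\item When this holds, the strict transform of $V$ at $p$ coincides with the weak transform of $\sum a_in_{i,p}$. Hence $\mult_p(V_p)=m_{\boldsymbol n,p}$, and the induction passes to the infinitely near points.
\item After the last blowup the weak transform of $I$ is the unit ideal. The total transform of $v$ is then the exceptional part times the pullback of a base-point-free linear system. For general $[a]$, Bertini away from base points makes the strict transform smooth and reduced, transverse to the exceptional divisor, and away from the finitely many exceptional double points.
\end{itemize}

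It follows that $\pi$ is an embedded resolution of $V$ and that $V$ is reduced. The finitely many conditions (one per center plus the final Bertini condition) are each Zariski open and nonempty, and their intersection gives $\mathcal U_1$.

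The main obstacle is the last step: checking that on the final surface the strict transforms of the general member have no singularities, no tangencies with the exceptional divisor, and do not pass through its double points. Base-point-freeness only gives this on the complement of a proper closed subset, and the subset must be controlled uniformly near the compact exceptional divisor. I would handle this by applying Bertini on a neighborhood of the exceptional divisor and, if needed, citing Casas-Alvero's statement that general members of an ideal go sharply through its weighted cluster.
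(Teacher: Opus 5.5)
Your proposal is correct and follows the paper's proof: the same reduction via \Cref{rem:generality-properties} to (\ref{cond:casas-alvero}) and (\ref{cond:section-multiplicity}), the same tangent-cone argument excluding the finitely many planar components of the tangent cone, and for (\ref{cond:casas-alvero}) the fact that general members of $\operatorname{span}_{\C}\{n_1,n_2,n_3\}$ go sharply through the weighted cluster $\mathcal C$, which the paper simply cites from Casas-Alvero (Theorem~7.2.13 and Lemma~7.2.6, with reducedness from Proposition~4.2.6(a)). Your inductive Bertini sketch unpacks that citation, and the final step you flag is easier than you fear: only points of the compact exceptional curve matter, and there the restricted system is base-point-free on each component $\mathbb P^1$, so having simple zeros, not vanishing identically on any component, and avoiding the finitely many double points are finitely many nonempty algebraic open conditions.
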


\begin{proof}
By \Cref{rem:generality-properties}, we just need to find an  Zariski open set where
(\ref{cond:casas-alvero}) and (\ref{cond:section-multiplicity}) hold simultaneously.

We start with (\ref{cond:casas-alvero}). The existence of a (non empty) Zariski open subset on which
(\ref{cond:casas-alvero}) holds, follows from the proof of
\cite[Theorem~7.2.13]{CasasAlvero}, and from
\cite[Lemma~7.2.6]{CasasAlvero}.
We explain here how these arguments apply in our situation, that is, to the elements of
$\operatorname{span}_{\C}\{n_1,n_2,n_3\}$.
In the terminology of \cite[p.~127]{CasasAlvero}, the curve germ $V$
{\em goes sharply through the weighted cluster} $\mathcal C$ if
$\mult_p(V_p)=m_{\boldsymbol n,p}$ for every $p\in\mathcal C$ and
$V$ has no singular infinitely near points outside $\mathcal C$.
Here singular infinitely near points are understood in the sense of
\cite[\S3.8]{CasasAlvero}. The fact that there are no such points outside $\mathcal C$ means
that $\pi$ is an embedded resolution of $V$. Indeed, the strict
transform is smooth and meets the exceptional divisor transversely
away from its crossings. The elements of $\operatorname{span}_{\C}\{n_1,n_2,n_3\}$ that define curve germs going sharply through
$\mathcal C$ form a nonempty Zariski open subset.
Indeed, the ideals excluded in the cited proof do not contain
$I$, and so they do not contain its generating span.
These germs are reduced by \cite[Proposition~4.2.6(a)]{CasasAlvero},
so (\ref{cond:casas-alvero}) holds on a nonempty Zariski open subset
of $\mathbb P(\check{\C}^3)$.
For related treatment of this subject, we also refer the reader to
\cite[Theorem~11.1.4(iii) and its proof, and Corollary~11.1.5,
p.~323]{Wall} as an alternative reference.

Let $F_d$ be the lowest nonzero
homogeneous part of a reduced local equation of $X$, of degree
$d=\mult_0(X)$. So (\ref{cond:section-multiplicity}) holds
as long as  $H_0$ is not a plane component of the tangent cone
$\{F_d=0\}$, so that $F_d|_{H_0}\not\equiv0$.
There are only finitely many such plane components, so the condition
holds on the complement of a finite subset of
$\mathbb P(\check{\C}^3)$, which is nonempty and Zariski open.

These two Zariski open subsets
are dense, so their intersection is nonempty and Zariski open.
\end{proof}

Fix a linear form $\lambda_1$ with $[\lambda_1]\in\mathcal U$
as in \Cref{lem:general-linear-forms} from now on. Choose once and for
all a closed Euclidean Milnor ball $\mathbb{B}\subset\C^3$ for $(X,0)$,
centered at the origin and small enough that $\mathbb{B}\cap H_0$ is a
Milnor ball for $(W,0)$, that
$\Sigma\cap H_0\cap \mathbb{B}=\{0\}$, and that $v$ has no critical point in
$\boldsymbol n^{-1}(\mathbb{B})\setminus\{0\}$. Require also that
$\Sigma\cap \mathbb{B}$ and
$(\boldsymbol n^{-1}(\Sigma\cap \mathbb{B}))_{\mathrm{red}}$ are smooth away
from the origin. For $s\in\C$, set
\[
 \begin{gathered}
  H_s=\{\lambda_1=s\},\\
  V_s=\{v=s\}\cap\boldsymbol n^{-1}(\mathbb{B}),\qquad
  W_s=X\cap H_s\cap \mathbb{B},\\
  \boldsymbol n_s=\boldsymbol n|_{V_s}\colon V_s\longrightarrow W_s.
 \end{gathered}
\]
We  use $V=V_0$ and $W=W_0$ for these representatives
of the central germs. We also restrict $X$ and $\Sigma$ to their
intersections with $\mathbb{B}$, and $\boldsymbol n$ to
$\boldsymbol n^{-1}(\mathbb{B})$, keeping the same notation.

Since $\mathbb{B}\cap H_0$ is a Milnor ball for $(W,0)$ and
$\Sigma\cap H_0=\{0\}$, the hyperplane $H_0$ meets
$X_{\mathrm{reg}}\cap\partial \mathbb{B}$ transversely.
By openness of transversality and compactness, we can choose $\eta>0$ such that
\[
 X\cap\partial \mathbb{B}\cap\{|\lambda_1|\leq\eta\}
 \subset X_{\mathrm{reg}}
\]
and such that $H_s$ meets $X_{\mathrm{reg}}\cap\partial \mathbb{B}$ transversely
whenever $|s|\leq\eta$. See \Cref{fig:plane-sections}.
\begin{figure}[!ht]
\centering
\begin{minipage}{0.95\linewidth}
	\centering
	\input{plane_sections_draft_figure}
\end{minipage}
\caption{Central and nearby fibers $V_s$ and $W_s$.}
\label{fig:plane-sections}
\end{figure}

At the origin we have
$m_{\boldsymbol n,0}=m_{\boldsymbol n}$, so
(\ref{cond:casas-alvero}) gives $\mult_0(V)=m_{\boldsymbol n}$.
If $(C,0)$ is a reduced plane curve germ, we denote by $\mu(C,0)$ and $\delta(C,0)$
its Milnor number and delta invariant respectively.

\begin{remark}
\label{rem:milnor-fibers}
For $s\ne0$ small enough, the surface $V_s$ is diffeomorphic to a Milnor
fiber of $v$, hence connected with $\chi(V_s)=1-\mu(V)$.
Indeed, choose a closed Euclidean Milnor ball
$\mathbb B_\epsilon\subset\operatorname{int}(\boldsymbol n^{-1}(\mathbb B))$
for $v$. The homeomorphism $\boldsymbol n_0:V\to W$ and the choices
of Milnor balls imply that
$V\setminus\operatorname{int}(\mathbb B_\epsilon)$ is a disjoint union
of annuli. For $s$ small enough, $V_s$ meets
$\partial\mathbb B_\epsilon$ transversely by the choice of Milnor ball.
At the outer boundary, transversality to $\boldsymbol n^{-1}(\partial\mathbb B)$
follows from the transversality of $H_s$ to
$X_{\mathrm{reg}}\cap\partial\mathbb B$, since $\boldsymbol n$ is
biholomorphic near these boundary intersections.
By compactness, property (\ref{cond:isolated-critical-point}) and
these transversality properties, Ehresmann's lemma for manifolds with boundary
identifies the corresponding parts of nearby fibers with these
annuli. So $V_s$ is obtained from
$\{v=s\}\cap\mathbb B_\epsilon$ by attaching annuli, or equivalently, boundary collars.
\end{remark}

Property~(\ref{cond:singular-locus-section}) gives
\begin{equation}
 \Sigma\cap\{\lambda_1=0\}=\{0\}.
 \label{eq:singular-section}
\end{equation}

\begin{proposition}
\label{prop:plane-sections}
The curve germs $(V,0)$ and $(W,0)$ have the same number
$b\geq1$ of branches. There is an integer $N\geq1$ such that, for every
nonzero $s$ small enough, the map
$
 \boldsymbol{n}_s:V_s\longrightarrow W_s
$
is the normalization, and
\begin{equation}
 \operatorname{Sing}(W_s)=\Sigma\cap H_s
 =\{Q_1(s),\ldots,Q_N(s)\}.
 \label{eq:singular-points}
\end{equation}
Moreover, each plane curve germ $(W_s,Q_j(s))$ is irreducible and
\begin{equation}
 \delta(W,0)=\delta(V,0)+
 \sum_{j=1}^{N}\delta\bigl(W_s,Q_j(s)\bigr).
 \label{eq:delta-specialization}
\end{equation}
\end{proposition}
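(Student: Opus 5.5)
We prove the statement in four steps: the branch count, the description of the singular points of $W_s$, the normalization property, and the delta identity.

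\textbf{Branches.} By (\ref{cond:singular-locus-section}) and \eqref{eq:singular-section} we have $W\cap\Sigma=\{0\}$. Since $\boldsymbol n$ is biholomorphic over $X\setminus\Sigma$, the map $\boldsymbol n_0:V\to W$ is a homeomorphism that restricts to a biholomorphism $V\setminus\{0\}\to W\setminus\{0\}$, as in \Cref{rem:generality-properties}. Branches of a reduced plane curve germ correspond to connected components of the punctured germ (equivalently, components of the link). Hence $V$ and $W$ have the same number $b\geq1$ of branches, and $b\ge1$ because $V\neq\{0\}$.

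\textbf{Singular points of $W_s$.} Take $s\neq0$ small. The plane $H_s$ meets the curve $\Sigma$ (restricted to $\mathbb B$) in finitely many points, since $\lambda_1$ does not vanish identically on any branch $\Sigma_i$. Each branch $\Sigma_i$ is smooth away from $0$ and $\lambda_1|_{\Sigma_i}$ is finite of degree $i_0(\lambda_1,\Sigma_i)$. So for $s$ small, $H_s\cap\Sigma$ consists of $N=\sum_i \deg(\lambda_1|_{\Sigma_i})\ge1$ distinct points $Q_j(s)$, and at each of them $H_s$ is transverse to $\Sigma$. We have $N\geq1$ because $\Sigma$ is a nonempty curve, as shown before the subsection.

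We must show $\operatorname{Sing}(W_s)=\Sigma\cap H_s$. First consider points outside $\Sigma$. At $q\in X_{\rm reg}\cap H_s$, the fiber $W_s$ is smooth at $q$ if and only if $d\lambda_1|_{T_qX}\neq0$. Under the biholomorphism $\boldsymbol n$, this is the condition $dv\neq0$ at $\boldsymbol n^{-1}(q)$, which holds by the choice of $\mathbb B$. Next consider $q\in\Sigma\cap H_s$. The germ $(X,q)$ is a non-normal hypersurface, so it is singular, and $(W_s,q)$ is a hyperplane section of it. A hyperplane section of a singular hypersurface through a singular point is singular, because its multiplicity is $\ge\mult_q X\ge2$.

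\textbf{Normalization and irreducibility.} The map $\boldsymbol n_s:V_s\to W_s$ is finite and bijective, since $\boldsymbol n$ is injective and $V_s=\boldsymbol n^{-1}(W_s)$. It is biholomorphic off the finite set $\Sigma\cap H_s$. Its source $V_s$ is smooth, because $s$ is a regular value of $v$. A finite, generically isomorphic map from a smooth curve is the normalization. Bijectivity then forces each germ $(W_s,Q_j(s))$ to have a single preimage point, hence to be irreducible.

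\textbf{The delta identity.} We compute Euler characteristics, or equivalently use $\delta$-constancy of the flat family $W_s$ in $H_s\cap\mathbb B$. The most direct route takes Bobadilla's identity \cite[equation~(4)]{Bobadilla}, $\mu(W,0)=\mu(V,0)+\sum_j\mu(W_s,Q_j(s))+(\text{correction})$, and converts it using Milnor's formula $\mu=2\delta-r+1$. Alternatively, we argue directly as follows.

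The family $W_s$ is a flat family of plane curves: it is the family of fibers of $\lambda_1|_X$, with $X$ a hypersurface. By the boundary transversality fixed via $\eta$, the fiber $W_s$ is a deformation of the Milnor representative $W$. Let $M_s$ be the smoothing of $W_s$ obtained by smoothing each $Q_j(s)$ locally. Then $M_s$ is a Milnor fiber of $W$, so
\[
\chi(M_s)=b-\mu(W,0)=2b-2\delta(W,0)-b.
\]
On the other hand, $\chi(W_s)=\chi(V_s)-\sum_j(r_j-1)=\chi(V_s)$, since each $r_j=1$. By \Cref{rem:milnor-fibers}, $\chi(V_s)=1-\mu(V)=b-2\delta(V,0)$, using Milnor's formula for $V$ with $b$ branches. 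Smoothing an irreducible singular point changes $\chi$ by $-\mu(W_s,Q_j)=-2\delta(W_s,Q_j)$. Equating the two expressions for $\chi(M_s)$ gives
\[
b-2\delta(W,0)=b-2\delta(V,0)-2\sum_j\delta(W_s,Q_j(s)),
\]
which is \eqref{eq:delta-specialization}.

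The main obstacle is justifying that the simultaneous local smoothings of $W_s$ yield a space diffeomorphic to the Milnor fiber of $W$. One has to show that a small perturbation $\lambda_1+\epsilon g$ of $\lambda_1|_X$ (a generic deformation of the equation within $H_s$) stays in the Milnor-fibration regime of $W$. We would handle this by working in the versal or Milnor fibration of the plane curve $F|_{H_0}$ and using the boundary transversality fixed by $\eta$, so that the Milnor fibers of $W$ are the fibers of any small deformation with transverse boundary.
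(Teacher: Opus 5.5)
Your branch count, your description of $\operatorname{Sing}(W_s)$, and your normalization and irreducibility step follow the paper's proof. Your multiplicity bound at the points of $\Sigma\cap H_s$ is the same observation as the paper's Jacobian criterion.

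The delta identity is where you take a different route. The paper quotes Bobadilla's identity $\mu(W,0)=\mu(V,0)+\sum_j\mu(W_s,Q_j(s))$ and converts it with Milnor's formula. You instead re-derive this identity by comparing a global smoothing $M_s$ of $W_s$ with the Milnor fiber of $W$. The identity has no ``correction'' term. Your own count, in which $\chi(W_s)=\chi(V_s)=1-\mu(V,0)$ contributes exactly $\mu(V,0)$, gives it as stated. Your version makes the proposition self-contained, up to Milnor-fibration facts of the kind already used in \Cref{rem:milnor-fibers}, and it shows where $\mu(V,0)$ comes from. The paper's version is a one-line citation.

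To close the step you flag, smooth the equation, not the linear form. Level sets of a perturbation of $\lambda_1$ restricted to $X$ still meet $\Sigma$, so they stay singular there. Instead take $M_s=\{F|_{H_s}=t\}\cap\mathbb B$, with $F$ a reduced equation of $X$ and $0<|t|\ll|s|$. Replacing conical neighborhoods of the $Q_j(s)$ by local Milnor fibers gives $\chi(M_s)=\chi(W_s)-\sum_j\mu(W_s,Q_j(s))$. Moreover, $M_s$ is diffeomorphic to the Milnor fiber of $(W,0)$, for the following reasons:
\begin{itemize}
\item The map $(\lambda_1,F)$ is proper on $\mathbb B$ over a small bidisk.
\item It is submersive on $\partial\mathbb B$ there, by the transversality fixed via $\eta$ and openness of transversality.
\item Its discriminant is a complex curve, so its complement is connected.
\item The points $(s,t)$ with $0<|t|\ll|s|$ and the points $(0,t)$ with $t\neq0$ both lie in that complement.
\item The fibers over $(0,t)$ are Milnor fibers of $F|_{H_0}$.
\end{itemize}

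There are two slips to fix. First, $\chi(M_s)=1-\mu(W,0)=b-2\delta(W,0)$, not $b-\mu(W,0)$; the final value you use is the right one. Second, the family $W_s$ is not $\delta$-constant. Its total $\delta$ drops by $\delta(V,0)>0$, which is exactly what \eqref{eq:delta-specialization} says, so the remark ``equivalently use $\delta$-constancy'' should be removed.
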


\begin{proof}
The homeomorphism $\boldsymbol n|_V:V\to W$ induces in particular a bijection between their branches,
giving the first part of the statement.
By (\ref{cond:singular-locus-section}), the restriction $\lambda_1|_\Sigma$
is finite, and so over a punctured disk chosen small enough, it is an unramified
covering of some degree $N\geq1$.
This implies that  $\Sigma\cap H_s$ consists of exactly $N$ points for every
nonzero $s$ small enough.

For such $s$, condition~(\ref{cond:isolated-critical-point}) implies
that $V_s$ is smooth.
Since $\boldsymbol n$ is biholomorphic over $X\setminus\Sigma$,
we have that $W_s$ is smooth outside $\Sigma\cap H_s$, hence reduced.
The Jacobian criterion gives that the points in $\Sigma \cap H_s$ are actually singular so we get \eqref{eq:singular-points}.
The finite bijective map $\boldsymbol n_s$ is therefore a normalization map from the smooth curve $V_s$. In particular, the  preimage of a neighborhood of each singular point $Q_j(s)$ is a disk and
so each germ $(W_s,Q_j(s))$ is irreducible.
Fern\'andez de Bobadilla also deduces this irreducibility with a topological argument using that the link is a topological manifold together with the conical structure lemma \cite[proof of Fact~1, p.~347]{Bobadilla}.

Finally, \cite[equation~(4)]{Bobadilla} can be restated as
\[
 \mu(W,0)=\mu(V,0)+
 \sum_{j=1}^N\mu\bigl(W_s,Q_j(s)\bigr).
\]
Milnor's formula \cite[Theorem~10.5]{Milnor}, together with the already proven fact that $(W,0)$ and $(V,0)$ have the same
number $b$ of branches, and the irreducibility of $(W_s,Q_j(s)$,
yields \eqref{eq:delta-specialization}.
\end{proof}
\subsection{Transverse multiplicities and numerical estimates}

For plane curve germs $C,C'$ without common components, we denote by
$i_0(C,C')$ their intersection multiplicity at the origin.  We similarly write
\[
 i_0(h_1,h_2)=i_0(\{h_1=0\},\{h_2=0\}),\qquad
 i_0(h_1,C)=i_0(\{h_1=0\},C)
\]
for given holomorphic germs $h_1,h_2$ and a curve germ $C$.  The singular locus curve $\Sigma=\operatorname{Sing}(X)$ has a decomposition into irreducible branches that we denote by
\[
 \Sigma=\bigcup_{i=1}^c\Sigma_i
\]
and put
\[
 \Gamma=\bigl(\boldsymbol{n}^{-1}(\Sigma)\bigr)_{\mathrm{red}}
 =\bigcup_{i=1}^c\Gamma_i,
 \qquad
 \Gamma_i=\bigl(\boldsymbol{n}^{-1}(\Sigma_i)\bigr)_{\mathrm{red}}\subset\C^2.
\]
The curves $\Gamma_i$ are precisely the irreducible components of the
critical locus of $\boldsymbol{n}$. Indeed, $\boldsymbol n$ is
biholomorphic over $X\setminus\Sigma$ and the rank argument in
\Cref{rem:generality-properties} excludes rank two on points above $\Sigma$.
By our choice of representatives, the curves $\Sigma_i$ and $\Gamma_i$
are both smooth away from the origin. And, moreover, the map
$
 v|_{\Gamma_i}:
 \Gamma_i\setminus\{0\}\longrightarrow\C
$
is a submersion. Indeed, near  $p\in\Gamma_i\setminus\{0\}$,
the restriction $\boldsymbol n|_{\Gamma_i}$ is locally biholomorphic
and $(d\boldsymbol n)_p$ has rank one (since $p$ lies in the
critical locus). So
$(d\boldsymbol n)_p(T_p\Gamma_i)=\operatorname{im}(d\boldsymbol n)_p$.
The choice of $\mathbb{B}$ gives $dv_p\ne0$, so
$dv_p|_{T_p\Gamma_i}\ne0$.
Moreover, \eqref{eq:singular-section} and the bijectivity
of $\boldsymbol{n}$ give
$V\cap\Gamma_i=\{0\}$, so $i_0(v,\Gamma_i)$ is finite and not zero.
Now let $p\in\Gamma_i\setminus\{0\}$ be sufficiently close to the
origin and set $s=v(p)$.
Then $s\neq0$. By (\ref{cond:isolated-critical-point}),
the curve $V_s$ is smooth at $p$. The
submersion $v|_{\Gamma_i\setminus\{0\}}$ shows that $V_s$ meets
$\Gamma_i$ transversely at $p$.  Moreover, $H_s$ meets $\Sigma_i$ transversely
at $\boldsymbol{n}(p)$, and
\[
 (\boldsymbol{n}_s)_p:(V_s,p)\longrightarrow(W_s,\boldsymbol{n}(p))
\]
is the normalization of the transverse plane curve germ
of $X$ along $\Sigma_i$, by \Cref{prop:plane-sections}.
For each fixed $i$, the embedded topological type of the transverse plane
branch $(W_s,\boldsymbol{n}(p))$ is independent of
$p\in\Gamma_i\setminus\{0\}$, although it may depend on
$i$ (see
\cite[Part~I, \S\S1.4--1.5, pp.~40--41]{GoreskyMacPherson} and see also \cite[proof of Fact~1, p.~347]{Bobadilla}).

\begin{definition}
\label{def:transverse-type}
We call this embedded topological type the transverse
type along $\Sigma_i$. We denote the multiplicity of the transverse
type along $\Sigma_i$ by $m_i$.
\end{definition}

The map $\boldsymbol{n}$ maps
$\Gamma_i$ biholomorphically onto $\Sigma_i$ outside the origin.  Therefore
$(d\boldsymbol{n})_p$ is nonzero on $T_p\Gamma_i$.  The point $p$
is in the critical locus of $\boldsymbol{n}$, so
$(d\boldsymbol{n})_p$ has rank one and
\[
 \operatorname{im}(d\boldsymbol{n})_p=T_{\boldsymbol{n}(p)}\Sigma_i.
\]
Since $H_s$ is transverse to $\Sigma_i$, we have that
$T_{\boldsymbol{n}(p)}H_s\cap T_{\boldsymbol{n}(p)}\Sigma_i=\{0\}$.
The image of $(d\boldsymbol{n}_s)_p$ lies in this intersection and so it
vanishes. So the transverse plane branch is singular and $m_i\geq2$.
For every $s\neq0$ small enough, the fiber $V_s$
meets $\Gamma_i$ transversely for all $i$. By
\eqref{eq:singular-points}, the images of these intersection points are
\[
 \operatorname{Sing}(W_s)
 =\{Q_1(s),\ldots,Q_N(s)\}.
\]
If $p\in V_s\cap\Gamma_i$, then
$\mult_{\boldsymbol{n}(p)}(W_s)=m_i$.  We define
\begin{equation}
 m_{\mathrm{tr}}=\max_{1\leq i\leq c}m_i
 =\max_{1\leq j\leq N}\mult_{Q_j(s)}(W_s)\geq2.
 \label{eq:transverse-multiplicities}
\end{equation}
By
(\ref{cond:section-multiplicity}),
$\mult_0(W)=\mult_0(X)$.  We define the quantities
\begin{equation}
 m_X=\mult_0(X)=\mult_0(W),\qquad
 M_{\boldsymbol{n}}=m_X-2\delta(V,0).
 \label{eq:section-invariants}
\end{equation}
The motivation to name these quantities comes from
\eqref{eq:resolution-multiplicity} and \eqref{eq:resolution-sums}.
\begin{proposition}
\label{prop:polar-estimate}
We have the inequalities
\begin{gather*}
 \sum_{i=1}^c (m_i-1)i_0(v,\Gamma_i)
 \leq 2m_X-M_{\boldsymbol{n}}-b,\\
 m_{\mathrm{tr}}
 \leq 1+\frac{2m_X-M_{\boldsymbol n}-b}{m_{\boldsymbol n}}.
\end{gather*}
\end{proposition}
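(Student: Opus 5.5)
The plan is to compute the intersection multiplicity of $v$ with a Jacobian (relative polar) curve in two ways. Choose a second linear form $\lambda_2$, linearly independent of $\lambda_1$, put $w=\lambda_2\circ\boldsymbol n$, and let
\[
J=\det\begin{pmatrix}\partial_x v&\partial_y v\\ \partial_x w&\partial_y w\end{pmatrix}.
\]
Away from $\Gamma$ the map $\boldsymbol n$ is an immersion onto the smooth part of $X$. So for $\lambda_2$ generic the map $(v,w)$ is a local diffeomorphism at generic points and $J\not\equiv0$. I also impose that the line $\{\lambda_2=0\}\cap H_0$ is not in the tangent cone of $W$. Then
\[
i_0(v,w)=i_0\bigl(W,\{\lambda_2=0\}\cap H_0\bigr)=\mult_0(W)=m_X .
\]
Here the first equality holds because $\boldsymbol n|_V:V\to W$ is the normalization branchwise, so intersection multiplicities can be computed on the parametrizations. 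The second equality uses (\ref{cond:section-multiplicity}).

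\emph{Upper side.} By the Lê--Teissier formula for the relative polar curve of $v$ with respect to $w$, using (\ref{cond:isolated-critical-point}), we get
\[
i_0(v,J)=\mu(V,0)+i_0(v,w)-1 .
\]
Milnor's formula gives $\mu(V,0)=2\delta(V,0)-b+1$. Hence
\[
i_0(v,J)=2\delta(V,0)-b+m_X=2m_X-M_{\boldsymbol n}-b .
\]

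\emph{Lower side.} I claim $J$ vanishes to order at least $m_i-1$ along $\Gamma_i$. Take $p\in\Gamma_i\setminus\{0\}$ and $s=v(p)$. Near $p$, $v$ is a submersion, so we may use local coordinates $(v,t)$ with $t$ a coordinate along $V_s$. In these coordinates $J$ is, up to a unit, $\partial_t w$. The restriction $w|_{V_s}$ is $\lambda_2$ composed with the normalization $(V_s,p)\to(W_s,\boldsymbol n(p))$ of a branch of multiplicity $m_i$. Every linear function on the plane $H_s$ has order $\ge m_i$ along such a parametrization, so $w|_{V_s}$ has order $\ge m_i$ in $t$. Therefore $\partial_t w$ vanishes to order $\ge m_i-1$ at $p$. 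This holds at every point of $\Gamma_i\setminus\{0\}$ and uniformly in $s$, so if $g_i$ is a reduced equation of $\Gamma_i$ we obtain $J=\prod_i g_i^{m_i-1}\cdot P$ with $P$ holomorphic. Since $V\cap\Gamma_i=\{0\}$, all intersection numbers involved are finite, and
\[
i_0(v,J)\ \ge\ \sum_i (m_i-1)\,i_0(v,\Gamma_i)+i_0(v,P)\ \ge\ \sum_i(m_i-1)\,i_0(v,\Gamma_i).
\]
Combining this with the upper side gives the first inequality.

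\emph{Second inequality.} Pick $i$ with $m_i=m_{\mathrm{tr}}$. Then $i_0(v,\Gamma_i)\ge \mult_0(V)\,\mult_0(\Gamma_i)\ge m_{\boldsymbol n}$, using $\mult_0(V)=m_{\boldsymbol n}$ from (\ref{cond:casas-alvero}). All other terms of the sum are nonnegative, so
\[
(m_{\mathrm{tr}}-1)\,m_{\boldsymbol n}\le 2m_X-M_{\boldsymbol n}-b ,
\]
which is the claim after dividing by $m_{\boldsymbol n}$.

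The main obstacle is the lower-side step: showing that the order of vanishing of $J$ along $\Gamma_i$ is controlled by the transverse multiplicity. This requires a careful local description of $\boldsymbol n$ near points of $\Gamma_i\setminus\{0\}$ as a family over $s$ of normalizations of the transverse branch. One must also check that the local order estimate globalizes to divisibility of $J$ by $g_i^{m_i-1}$ in $\C\{x,y\}$. The order inequality needs no genericity of $\lambda_2$ with respect to the transverse branches, since any linear form has order at least the multiplicity.
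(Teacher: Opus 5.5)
Your proof is correct and is essentially the paper's argument, phrased with relative polar curves. The L\^e--Teissier identity $i_0(v,J)=\mu(V,0)+i_0(v,w)-1$ is exactly the Riemann--Hurwitz count for $w=\lambda_2\circ\boldsymbol n$ on a Milnor fiber $V'_s$ that the paper carries out directly, with the same non-tangency choice of $\lambda_2$ giving $i_0(v,w)=m_X$. Your divisibility $J=\prod_i g_i^{m_i-1}P$ is the germ-level form of the paper's pointwise bound $e_p\geq m_i$ at the $i_0(v,\Gamma_i)$ transverse points of $V'_s\cap\Gamma_i$. The only difference is bookkeeping: you pass from the vanishing order of $J$ at generic points of $\Gamma_i$ to divisibility in $\C\{x,y\}$, a standard step, while the paper counts ramification points on a single fiber; the second inequality is deduced identically.
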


\begin{proof}
Choose a linear form $\lambda_2:\C^3\to\C$, linearly independent of
$\lambda_1$ and such that the line
\[
 L=\ker(\lambda_1)\cap\ker(\lambda_2)
\]
is not tangent to $W$. This choice is possible since $W$ has
only finitely many tangent lines at the origin.
Put $u=\lambda_2\circ\boldsymbol n$ and $U=\{u=0\}$.
Denote by $V_1,\ldots,V_b$ the branches of $V$ at the origin, and by
$\gamma_j:(\C,0)\to(V_j,0)$ some parametrizations (normalizations).
Since $\boldsymbol n|_V$ is finite and injective, the corresponding
branches of $W$ are $W_j=\boldsymbol n(V_j)$, with parametrizations
(normalizations) $\eta_j=\boldsymbol n\circ\gamma_j$.
By (\ref{cond:casas-alvero}) and (\ref{cond:section-reduced}), the curves
$V$ and $W$ are reduced. So
\[
 i_0(v,u)
 =\sum_{j=1}^b\ord_0(u\circ\gamma_j)
 =\sum_{j=1}^b\ord_0(\lambda_2\circ\eta_j)
 =i_0(W,L)=m_X,
\]
where the last equality uses that $L$ is not a tangent (by construction) and
(\ref{cond:section-multiplicity}). In particular, the map germ $(v,u)$
is finite. For $\delta>0$ small enough and $0<|s|\ll\delta$,
we can choose (using (\ref{cond:isolated-critical-point})),
a  Milnor representative $V'_s\subset V_s$ with
$u:V'_s\to\{z\in\C:|z|\leq\delta\}$ a proper branched covering which is
unramified on the boundary. Its degree is $i_0(v,u)=m_X$ by
conservation of local intersection number, and $\chi(V'_s)=1-\mu(V)$.
If we denote by $e_p$ the local degree of $u|_{V'_s}$ at $p$,
Riemann--Hurwitz gives
\[
 \sum_{p\in V'_s}(e_p-1)
 =m_X-\chi(V'_s)=m_X+\mu(V)-1.
\]
By (\ref{cond:singular-locus-section}), $V\cap\Gamma_i=\{0\}$
for each $i$. The conservation of local intersection number and the
transversality of $V_s$ and $\Gamma_i$ for $s\ne0$ give
\[
 \#(V'_s\cap\Gamma_i)=i_0(v,\Gamma_i).
\]
At each such point $p\in V'_s\cap\Gamma_i$, the restriction of
$\boldsymbol n$ to $V'_s$ parametrizes a transverse plane branch of
multiplicity $m_i$ at $Q=\boldsymbol n(p)$ (recall
\Cref{prop:plane-sections}). Since $\lambda_2$ is linear and its
restriction to this branch is nonconstant,
\[
 e_p=\ord_p\bigl(u|_{V'_s}-u(p)\bigr)\geq m_i,
\]
where equality holds if and only if the affine line $Q+L$ is not tangent to $W_s$
at $Q$. Grouping these lower bounds by branch $\Gamma_i$ we get a term $\sum_{i=1}^c(m_i-1)i_0(v,\Gamma_i)$. And discarding
any possible ramification over the smooth locus of $W_s$, we get the inequality
\[
 \sum_{i=1}^c(m_i-1)i_0(v,\Gamma_i)
 \leq m_X+\mu(V)-1
 =2m_X-M_{\boldsymbol n}-b.
\]
Where the last equality follows from Milnor's formula
$\mu(V)=2\delta(V,0)-b+1$ and the definition of
$M_{\boldsymbol n}$ in \eqref{eq:section-invariants}.

To obtain the upper bound on $m_{\mathrm{tr}}$, choose $i$ with
$m_i=m_{\mathrm{tr}}$. As observed above, $V\cap\Gamma_i=\{0\}$,
so the local intersection inequality and
\eqref{eq:general-section-order} give
\[
 i_0(v,\Gamma_i)
 \geq\mult_0(V)\mult_0(\Gamma_i)\geq m_{\boldsymbol n}.
\]
Since $m_{\mathrm{tr}}\geq2$, retaining the term corresponding to $i$
in the first inequality of the proposition yields
\[
 m_{\boldsymbol n}(m_{\mathrm{tr}}-1)
 \leq(m_{\mathrm{tr}}-1)i_0(v,\Gamma_i)
 \leq2m_X-M_{\boldsymbol n}-b.
\]
Dividing by $m_{\boldsymbol n}>0$ proves the upper bound.
\end{proof}

For the rest of this subsection, fix $\lambda_2$ as in the proof of
\Cref{prop:polar-estimate}, with the notation $L,u,U$ introduced there.

\begin{proposition}
\label{prop:plane-estimates}
With $\mathcal C$ and $m_{\boldsymbol n,p}$ as in
\Cref{not:principalization}, one has
\begin{align}
 m_X&=\sum_{p\in\mathcal C}m_{\boldsymbol n,p}^2,
 \label{eq:resolution-multiplicity}\\
 M_{\boldsymbol n}&=\sum_{p\in\mathcal C}m_{\boldsymbol n,p}.
 \label{eq:resolution-sums}
\end{align}
In particular,
\begin{equation}
 m_{\boldsymbol{n}}\leq M_{\boldsymbol{n}},\qquad
 m_X\leq m_{\boldsymbol{n}}M_{\boldsymbol{n}}.
 \label{eq:pointbounds}
\end{equation}
\end{proposition}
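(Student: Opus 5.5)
The plan is to read both identities off the cluster of base points $\mathcal C$, using Noether's formula for the intersection multiplicity and the classical expression of the delta invariant through infinitely near multiplicities. For \eqref{eq:resolution-multiplicity}, start from the computation in the proof of \Cref{prop:polar-estimate}: $m_X=i_0(v,u)$ for every $\lambda_2$ independent of $\lambda_1$ whose line $L=\ker\lambda_1\cap\ker\lambda_2$ is not tangent to $W$. This value does not depend on the admissible choice of $\lambda_2$. So we may shrink to a nonempty Zariski open set of $\lambda_2$ on which two further things hold. First, $u=\lambda_2\circ\boldsymbol n$ satisfies (\ref{cond:casas-alvero}), by \Cref{lem:general-linear-forms} applied to $\lambda_2$. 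Second, the strict transforms of $V$ and $U$ share no point beyond $\mathcal C$.

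The second condition should follow from the construction of $\pi$. After the last blowups, the weak transforms of $I$ at the points of the final exceptional divisor are the unit ideal. Hence the linear system $\operatorname{span}\{n_1,n_2,n_3\}$ has no base points outside $\mathcal C$, and for a fixed general $v$ a general $u$ avoids the finitely many points where $\widetilde V$ meets the last exceptional components. Noether's formula \cite[Theorem~3.3.1]{CasasAlvero} then gives
\[
 m_X=i_0(v,u)=\sum_{p\in\mathcal C}\mult_p(V_p)\,\mult_p(U_p)=\sum_{p\in\mathcal C}m_{\boldsymbol n,p}^2,
\]
using (\ref{cond:casas-alvero}) for both curves.

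For \eqref{eq:resolution-sums}, use that $\pi$ is an embedded resolution of $V$ by (\ref{cond:casas-alvero}). Every singular infinitely near point of $V$ therefore lies in $\mathcal C$, and all infinitely near points outside $\mathcal C$ are smooth points of the strict transform. Points of $\mathcal C$ where $m_{\boldsymbol n,p}=1$ contribute nothing below, so they are harmless. The standard formula $\delta(V,0)=\sum_p \mult_p(V_p)(\mult_p(V_p)-1)/2$, summed over all infinitely near points, then gives
\[
 2\delta(V,0)=\sum_{p\in\mathcal C}m_{\boldsymbol n,p}(m_{\boldsymbol n,p}-1).
\]
Subtracting this from \eqref{eq:resolution-multiplicity} and using the definition \eqref{eq:section-invariants} yields $M_{\boldsymbol n}=\sum_{p\in\mathcal C}m_{\boldsymbol n,p}$.

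For \eqref{eq:pointbounds}, note that every center $p\in\mathcal C$ has a proper weak transform, so $m_{\boldsymbol n,p}\geq1$. Also $m_{\boldsymbol n,0}=m_{\boldsymbol n}$, so $m_{\boldsymbol n}\leq M_{\boldsymbol n}$. Moreover $m_{\boldsymbol n,p}=\mult_p(V_p)\leq \mult_0(V)=m_{\boldsymbol n}$, because multiplicities of strict transforms of a curve do not increase under blowing up. Therefore
\[
 m_X=\sum_{p\in\mathcal C} m_{\boldsymbol n,p}^2\leq m_{\boldsymbol n}\sum_{p\in\mathcal C} m_{\boldsymbol n,p}=m_{\boldsymbol n}M_{\boldsymbol n}.
\]

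The main obstacle I expect is the genericity step in the first paragraph. We must justify that a general second member $u$ of the linear system meets $V$ only through the base points, with $\mult_p(U_p)=m_{\boldsymbol n,p}$, while keeping the earlier condition that $L$ is not tangent to $W$. I would first try to prove this through the unit-ideal property of the final weak transform, which says that the system is base-point free after $\pi$. An alternative is to cite directly the equality of $e(I)$ with $\sum m_{\boldsymbol n,p}^2$ and with the intersection multiplicity of two general elements of $I$ \cite[Chapter~7]{CasasAlvero}.
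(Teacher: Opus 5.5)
Your proof is correct, but for \eqref{eq:resolution-multiplicity} it takes a different route from the paper. You use that $i_0(v,u)=m_X$ holds for every admissible $\lambda_2$. You then make $\lambda_2$ doubly generic: $u$ satisfies (\ref{cond:casas-alvero}), and $\widetilde U$ avoids the finitely many points of $\widetilde V\cap\pi^{-1}(0)$. These finitely many linear conditions can be imposed because the transformed system is base-point free. Noether's formula then gives the identity. The paper keeps only the non-tangency condition on $\lambda_2$ from \Cref{prop:polar-estimate} and works branch by branch. Non-tangency of $L$ forces $\ord_0(u\circ\gamma_B)=\min_{h\in I}\ord_0(h\circ\gamma_B)$, and \cite[Lemma~11.2.1(ii)]{Wall} writes this minimum as $\sum_{p\in\mathcal C}\mult_p(B_p)m_{\boldsymbol n,p}$. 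Summing over branches and using (\ref{cond:casas-alvero}) for $V$ alone then gives the identity. The paper's version needs no genericity beyond what is already in place; \Cref{rem:alternative-resolution} discusses your Noether-formula route and gives exactly this as its reason for avoiding it. Your version relies only on the classical Noether formula and shows directly that $m_X$ is the intersection multiplicity of two general members of the system. One minor imprecision: $\widetilde V$ may meet exceptional components other than the last ones. Your base-point-freeness argument works at every point of $\pi^{-1}(0)$, so this costs nothing. For \eqref{eq:resolution-sums}, your direct use of the $\delta$-formula over infinitely near points is equivalent to the paper's use of \cite[Theorem~6.5.9]{Wall} together with Milnor's formula. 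The deduction of \eqref{eq:pointbounds} is the same as in the paper.
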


\begin{proof}
For each branch $B$ of $V$, let $\gamma_B$ be a
parametrization  and let $B_p$ be the strict transform of $B$
at $p\in\mathcal C$.
Since $L$ is not tangent to any branch of $W$, the linear map $\lambda_2$
does not vanish on the first nonzero vector coefficient of
$\boldsymbol n\circ\gamma_B$. Therefore, using
\cite[Lemma~11.2.1(ii)]{Wall}, we obtain
\[
 \ord_0(u\circ\gamma_B)
 =\min_{h\in I}\ord_0(h\circ\gamma_B)
 =\sum_{p\in\mathcal C}\mult_p(B_p)m_{\boldsymbol n,p}.
\]
Summing over the branches of $V$ gives
\[
 i_0(v,u)
 =\sum_B\ord_0(u\circ\gamma_B)\\
 =\sum_{p\in\mathcal C}m_{\boldsymbol n,p}
       \sum_B\mult_p(B_p)
 =\sum_{p\in\mathcal C}m_{\boldsymbol n,p}^2.
\]
The last equality follows from
(\ref{cond:casas-alvero}). Since $i_0(v,u)=m_X$ by the proof of
\Cref{prop:polar-estimate}, this proves
\eqref{eq:resolution-multiplicity}. By (\ref{cond:casas-alvero}) and \cite[Theorem~6.5.9]{Wall}, we have
$
 \mu(V)=\sum_{p\in\mathcal C}m_{\boldsymbol n,p}(m_{\boldsymbol n,p}-1)-b+1.
$
Together with Milnor's formula
$
 \mu(V)=2\delta(V,0)-b+1
$, we get
\[
 2\delta(V,0)=\sum_{p\in\mathcal C}m_{\boldsymbol n,p}(m_{\boldsymbol n,p}-1).
\]
Consequently, by \eqref{eq:resolution-multiplicity} and the definition
of $M_{\boldsymbol n}$ in \eqref{eq:section-invariants}, we obtain
\eqref{eq:resolution-sums}. Since $m_{\boldsymbol n,0}=m_{\boldsymbol n}$,
\eqref{eq:resolution-sums} gives the first inequality in
\eqref{eq:pointbounds}. Moreover, the multiplicity of
strict transforms does not increase by blowups, which proves the second inequality in \eqref{eq:pointbounds}.
\end{proof}

\begin{remark}
\label{rem:alternative-resolution}
We explain here that the inequalities \eqref{eq:pointbounds} could have also been obtained by just
resolving the general member $V$ (without necessarily principalizing
the ideal $I$). An embedded  resolution of $V$ may not blow up some of the base
points of $I$ at which the strict transform of $V$ is already
smooth. Observe that these points contribute to the sum in
\eqref{eq:resolution-multiplicity}, but not to the formula for
$\delta(V,0)$. Denote by $\mathcal S\subset\mathcal C$
the set of centers of the embedded resolution of $V$. Then, one would need to
choose $\lambda_2$ also sufficiently general so that the strict
transforms of $U$ and $V$ have the same multiplicity
$m_{\boldsymbol n,p}$ at all points in $p\in\mathcal S$. Then, 
Noether's formula \cite[Theorem~3.3.1]{CasasAlvero} yields
$m_X=\sum_{p\in\mathcal S}m_{\boldsymbol n,p}^2+R$, where
$R\geq0$ is the sum of the remaining intersection multiplicities of
the strict transforms of $U$ and $V$ above the origin. The formula for the delta
invariant would also give, in this case,
$M_{\boldsymbol n}=\sum_{p\in\mathcal S}m_{\boldsymbol n,p}+R$.
The residual term contributes $R$ to $M_{\boldsymbol n}$ and
$(m_{\boldsymbol n}-1)R$ to
$m_{\boldsymbol n}M_{\boldsymbol n}-m_X$, so its nonnegativity
suffices to recover \eqref{eq:pointbounds} and the subsequent
arguments. We choose principalization over this method because it gives the exact identities
\eqref{eq:resolution-multiplicity} and \eqref{eq:resolution-sums} which we think interesting in their own. Also they allow us to impose a simpler condition
on $\lambda_2$  and the residual term $R$ does not appear.
\end{remark}

\section{Knot cobordisms}
\label{sec:knot-cobordisms}

For the ball $\mathbb{B}$ fixed in \Cref{sec:plane-sections}, the intersection
$
 \mathbb{B}_W=\mathbb{B}\cap H_0
$
is a Milnor ball for $(W,0)$. Set
\[
 \mathbb{S}_W=\partial \mathbb{B}_W,\qquad \Lambda_W=W\cap \mathbb{S}_W.
\]
So $\Lambda_W$ is a smooth oriented link in $\mathbb{S}_W$. Choose an
orientation preserving diffeomorphism $\mathbb{S}_W\cong \mathbb{S}^3$. Via this identification  we can look at the positive torus knot
$T(m_X,m_X+1)$ as a knot in $\mathbb{S}_W$. 

An orientation on a manifold induces an orientation on its boundary  by the
following convention: an outward pointing normal vector followed by a
positive boundary basis gives a positive basis of the manifold. If $Y$ is an oriented manifold, we denote by $-Y$ the manifold with the other orientation.

\begin{definition}
\label{def:cobordisms}
Let $Y_0$ and $Y_1$ be closed oriented smooth manifolds of dimension $d$. A {\em cobordism} from
$Y_0$ to $Y_1$ is a compact connected smooth manifold $Z$ of eimension $(d+1)$ with
a chosen orientation and a fixed identification
\[
 \partial Z=-Y_0\sqcup Y_1
\]
of its boundary with the disjoint union of $-Y_0$ and $Y_1$. Here $\partial Z$ carries the orientation
induced by $Z$ with our convention. The boundary pieces $-Y_0$ and $Y_1$
are called the {\em incoming} and {\em outgoing} boundaries,
respectively.
\end{definition}

\begin{definition}
\label{def:link-cobordisms}
Let $Y_0,Y_1$ be closed oriented smooth $3$-manifolds, and let
$L_i\subset Y_i$ be oriented links for $i=0,1$. An {\em oriented
link cobordism} from $(Y_0,L_0)$ to $(Y_1,L_1)$ is a pair
$(Z,F)$, where $Z$ is a cobordism from $Y_0$ to $Y_1$ and
$F$ is a cobordism from $L_0$ to $L_1$ which is, moreover, a smooth oriented
surface properly embedded in $Z$.
The links $L_0$ and $L_1$ are called the {\em incoming and outgoing links},
respectively. We denote the genus of $F$ by $g(F)$.
\end{definition}

\begin{remark}
\label{rem:boundary-orientations}
For a closed oriented $3$-manifold $Y$ and an oriented link
$L\subset Y$, we orient $Y\times[0,1]$ and $L\times[0,1]$ so that
their induced boundary pairs are $(-Y,-L)$ at level $0$ and
$(Y,L)$ at level $1$.
Note that these orientations are opposite to the {\em natural} product orientations in
the displayed order.
\end{remark}

Given an oriented link $L\subset \mathbb{S}^3$, we denote by $g_3(L)$ its
{\em Seifert genus}, that is, the smallest genus realized by its connected Seifert surfaces.
Every fiber surface $F$ for $L$ realizes this genus
\cite[Proposition~2.19(3)]{Rudolph}
(Proposition~19(3) in arXiv:math/0411115v2), so $g_3(L)=g(F)$.
In particular, all fiber surfaces for $L$ have the same Euler
characteristic and so they are homeomorphic. Furthermore, the same proposition also shows that any two
such surfaces are actually isotopic.

We summarize the rest of the section. We construct two oriented link cobordisms whose composition has a knot
on each end. For $s_0\neq0$ small enough, let $K_{Q_j}$
be the knot of the transverse plane branch $(W_{s_0},Q_j)$, and
let $K_{\mathrm{tr}}$ be their oriented connected sum, defined in
\eqref{eq:transverse-knot}. The exterior surface $F_W$ joins
$K_{\mathrm{tr}}$ to the link $\Lambda_{s_0}$. The fixed
boundary trivialization in \Cref{subsec:exterior-cobordism}
identifies this outgoing link with $\Lambda_W$, which has $b$
components. The braid cobordism $C$ joins $\Lambda_W$ to
$T(m_X,m_X+1)$.
\begin{figure}[H]
\centering
\begin{tikzpicture}[
  >=Latex,
  every node/.style={font=\small,align=center},
  object/.style={
    draw=black!40,rounded corners=3pt,fill=white,
    inner xsep=7pt,inner ysep=7pt,minimum height=11mm
  },
  construction/.style={->,draw=DarkTurquoise,line width=1pt},
  cobordism/.style={->,draw=ReferenceViolet,line width=1.1pt},
  identification/.style={
    <->,draw=black!50,line width=.8pt,dash pattern=on 3pt off 3pt
  },
  explanation/.style={font=\footnotesize,inner sep=2pt},
  reference/.style={font=\scriptsize,inner sep=2pt}
]
\node[object,text width=2.3cm] (local) at (1.6,0)
  {$K_{Q_j}\subset\partial \mathbb{B}_j$\\[-1pt]
   {\footnotesize for $1\leq j\leq N$}};
\node[object,text width=2.3cm] (transverse) at (7.6,0)
  {$K_{\mathrm{tr}}\subset\partial \mathbb{B}_{\mathrm{in}}$};
\node[object,text width=2.3cm] (moving) at (12.2,0)
  {$\Lambda_{s_0}\subset \mathbb{S}_{s_0}$};
\node[object,text width=2.3cm] (central) at (12.2,-2.8)
  {$\Lambda_W\subset \mathbb{S}_W$};
\node[object,text width=4cm] (torus) at (2.45,-2.8)
  {$T(m_X,m_X+1)\subset \mathbb{S}_W$};

\draw[construction] (local.east) --
  node[explanation,above=5pt]
    {realize $\mathop{\#}_{j=1}^{N}K_{Q_j}$\\[2pt]
     along a tree in $H_{s_0}$}
  (transverse.west);
\draw[cobordism] (transverse.east) --
  node[explanation,above=5pt] {$F_W\subset Z$}
  node[reference,below=17pt] {\Cref{prop:exterior}}
  (moving.west);
\draw[identification] (moving.south) --
  node[explanation,right=7pt,align=left]
    {boundary\\trivialization\\[-1pt]
     {\scriptsize\Cref{subsec:exterior-cobordism}}}
  (central.north);
\draw[cobordism] (central.west) --
  node[explanation,above=5pt] {$C\subset \mathbb{S}_W\times[0,1]$}
  node[reference,below=5pt] {\Cref{prop:braid}}
  (torus.east);
\end{tikzpicture}
\caption{Gluing $F_W$ and $C$ gives a cobordism from
$K_{\mathrm{tr}}$ to $T(m_X,m_X+1)$.}
\label{fig:knot-cobordisms}
\end{figure}

If we glue $F_W$ to $C$ along these $b$ circles, we get a connected
knot cobordism inside a manifold diffeomorphic to $\mathbb{S}^3\times[0,1]$. We compute its
genus in \Cref{cor:genus-identities}. In
\Cref{sec:numerical-estimates}, we apply the estimate coming from knot Floer homology
to obtain \Cref{cor:upsilon-comparison}.

We recall the oriented band attachment operation, see for example,
\cite[Definition~12.3.3]{Kawauchi}.

\begin{definition}
\label{def:band-attachment}
Let $I=[0,1]$ denote the unit interval. A {\em band attached} to an
oriented link $L$ in an oriented $3$-manifold $Y$ is a smooth
embedding
$
 a:I\times I\longrightarrow Y
$
such that
$
 a(I\times I)\cap L=a(\{0,1\}\times I).
$
{\em Attaching the band} means replacing these two attaching arcs by
$a(I\times\{0,1\})$ and smoothing the four corners. The band is
oriented if the orientation of $L$ along the attaching arcs agrees
with the boundary orientation induced by an orientation of the band.
\end{definition}

\begin{remark}
\label{rem:braid-letter-cobordism}
As explained, for example, in \cite[Section~13.1, p.~172]{Kawauchi},
a sequence of $s$ oriented band attachments from $L$ to $L'$ can be realized
by a compact smooth oriented properly embedded surface
$F\subset Y\times[0,1]$, with boundary
$\partial F = -(L\times\{0\})\sqcup(L'\times\{1\})$. Moreover, we can assume that the surface $F$ is a
product cylinder near levels $0$ and $1$. 
The projection
\[
 q_F\colon F\longrightarrow[0,1],\qquad (x,t)\longmapsto t,
\]
has exactly $s$ critical points, all saddles
(see also \Cref{fig:band-attachment-steps}). Each band lowers the Euler
characteristic by one, so $\chi(F)=-s$. Every connected component of $F$ meets $L'\times\{1\}$, because otherwise
$q_F$ would have maximum on a point in the interior on that component.
Thus $F$ is connected if $L'$ is a knot.

Deleting a letter from a positive braid word amounts to replacing the corresponding
crossing of its closure by its oriented smoothing. This  is equivalent to an oriented
band attachment, as illustrated in \Cref{fig:word-surface}.
Inserting a letter also corresponds to an oriented band
attachment, so both operations are realized by the same
construction in $\mathbb{S}^3\times[0,1]$.
\end{remark}
\begin{figure}[!ht]
\centering
\begingroup
\footnotesize
\graphicspath{{./}}
\def\svgwidth{\linewidth}
\input{band_attachment_steps_labels.tex}
\par
\endgroup
\caption{An oriented band attachment from $L$ to $L'$ on the left, and its realization by a saddle on the right. Outside a neighborhood of the band, the cobordism is the product of the unchanged part of the link with $[0,1]$.}
\label{fig:band-attachment-steps}
\end{figure}

\FloatBarrier

\subsection{Braid construction and cobordism with the torus knot}
\label{subsec:braid-cobordism}

In this subsection, we construct a link cobordism from $\Lambda_W$ to the torus knot
$T(m_X,m_X+1)$ by deleting and inserting positive braid letters.

For any positive braid word $\mathbf a$, let
$\ell(\mathbf a)$ denote its number of letters.
Its {\em word surface} is obtained from one disk for each strand by attaching
one band with a positive half twist for each letter
(see \Cref{fig:word-surface}). If the word contains every Artin
generator, these bands connect all the disks, and Stallings' theorem
implies that the word surface is a fiber surface
\cite[Theorem~2]{Stallings}.
\begin{figure}[htbp]
\centering
\begingroup
\footnotesize
\graphicspath{{./}}
\def\svgwidth{\linewidth}
\input{word_surface_labels.tex}
\par
\endgroup
\caption{The operation of attaching the band (yellow) in (a) is equivalent to deleting the corresponding $\sigma_2$ from the word.
The result gives (c) after an isotopy.}
\label{fig:word-surface}
\end{figure}

The construction in the proof of the next proposition adapts to links the band attachment argument in \cite[proof of Lemma~3.5]{FellerKrcatovich}. The result could also be obtained by first inserting $b-1$ positive braid generators to join the $b$ components into a knot, and then applying the construction from the cited paper. We give the details here for the case of links for completeness. 
\begin{proposition}\label{prop:braid}
There is a connected smooth oriented link cobordism
$C\subset \mathbb{S}_W\times[0,1]$ from $\Lambda_W$ to
$T(m_X,m_X+1)$ such that
\[
 g(C)=\delta(W,0)-\frac{m_X(m_X-1)}2.
\]
\end{proposition}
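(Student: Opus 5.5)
Our plan is to realize $\Lambda_W$ as the closure of a positive braid on $m_X$ strands, and then pass to the standard word of $T(m_X,m_X+1)$ by deleting and inserting letters. Each deletion or insertion is an oriented band attachment (\Cref{rem:braid-letter-cobordism}). The genus of the resulting surface is then computed from Euler characteristics.

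\textbf{Step 1 (braid presentation).} Since $(W,0)$ is a reduced plane curve germ of multiplicity $m_X$, its link $\Lambda_W$ is the closure of a positive braid $\beta$ on $m_X$ strands. To see this, we choose coordinates in $H_0$ with the line $L$ (not tangent to $W$) as one coordinate axis. The projection along $L$ is then a branched covering of degree $m_X$, and each branch has a Puiseux expansion in the transverse coordinate. The standard description (Lê/Rudolph, via Puiseux parametrizations and iterated cabling) writes $\Lambda_W$ as the closure of a positive braid word $\mathbf a$ in $\sigma_1,\dots,\sigma_{m_X-1}$. Since $\Lambda_W$ is the link of a singularity, the braid can be chosen to contain the full twist $\Delta^2$ as a subword; this uses that all branches are tangent to the tangent cone with total multiplicity $m_X$. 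By Milnor's formula for the fiber surface, the word surface is a fiber surface of Euler characteristic $m_X-\ell(\mathbf a)=1-\mu(W)=b-2\delta(W,0)$. Hence $\ell(\mathbf a)=m_X-b+2\delta(W,0)$.

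\textbf{Step 2 (letter surgery).} Write $\mathbf a=\Delta^2\mathbf r$, where $\Delta^2$ has $m_X(m_X-1)$ letters. The torus knot $T(m_X,m_X+1)$ is the closure of $\Delta^2\sigma_1\cdots\sigma_{m_X-1}$, a word of length $(m_X-1)(m_X+1)$. Since $\mathbf r$ is nonempty when $\mathbf a$ is longer, we delete all letters of $\mathbf r$ and insert $\sigma_1\cdots\sigma_{m_X-1}$. To keep the count minimal, we first delete only letters of $\mathbf r$ that are not needed. Concretely, we would choose a subword of $\mathbf r$ containing $\sigma_1\cdots\sigma_{m_X-1}$ up to braid relations and conjugation, if possible; otherwise we insert the missing generators.

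\textbf{Step 3 (genus count).} Let $s$ be the total number of band moves. Then $s=\ell(\mathbf r)+(m_X-1)$, or fewer if letters are reused. By \Cref{rem:braid-letter-cobordism}, $\chi(C)=-s$, and $C$ is connected because the outgoing end is a knot. Its boundary has $b+1$ components, so $2-2g(C)-(b+1)=-s$. Since $\ell(\mathbf r)=\ell(\mathbf a)-m_X(m_X-1)$, we get
\[
2g(C)=s+1-b=\ell(\mathbf a)-m_X(m_X-1)+m_X-b .
\]
Substituting $\ell(\mathbf a)=m_X-b+2\delta(W,0)$ gives $2g(C)=2\delta(W,0)-m_X(m_X-1)+2(m_X-b)$. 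This is too large by $2(m_X-b)$, so naive deletion and reinsertion is not optimal.

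The target identity is equivalent to $s=2\delta(W,0)-m_X(m_X-1)+b-1=\ell(\mathbf a)-(m_X-1)(m_X+1)$. This is exactly the difference of word lengths, so only deletions can be used. We therefore need the stronger input that $\mathbf a$ contains $\Delta^2\sigma_1\cdots\sigma_{m_X-1}$ as a subword, after positive braid relations and conjugation. This is the \textbf{main obstacle}. We would obtain it from the Puiseux-type braid (Feller--Krcatovich style): the first Puiseux stage gives $\Delta^2$ from the tangent-cone layer, and the next stage, where strands of each tangent direction separate, contributes a positive braid connecting all strands. In the $b>1$ case with distinct tangents this fails, since there is no extra letter there. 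In that case we would use the connectivity from genericity ($W$ is a general section) or insert $b-1$ generators first, as suggested before the statement, and recheck the count. Finally, we glue the saddle cobordisms in $\mathbb{S}_W\times[0,1]$, with product ends.
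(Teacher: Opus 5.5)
There is a genuine gap, located exactly at what you call the main obstacle, and it is partly caused by an arithmetic slip. Your bookkeeping in Step~1 is fine: $\ell(\mathbf a)=m_X-b+2\delta(W,0)$ and $2g(C)=s+1-b$. The existence of a positive word $\Delta^2\mathbf r$ for $\Lambda_W$ needs a real reference, such as Fomin's Lemma~13.5, which the paper uses. ``All branches are tangent to the tangent cone'' does not justify it.

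The slip is in the target count. The required $s=2\delta(W,0)-m_X(m_X-1)+b-1$ is \emph{not} equal to $\ell(\mathbf a)-(m_X^2-1)$. The latter is $2\delta(W,0)-m_X(m_X-1)+1-b$, so the two differ by $2(b-1)$. Hence ``only deletions'' is the right target only when $b=1$. For $b>1$ you need exactly $b-1$ insertions, and a deletion-only route cannot work in general: for an ordinary $m_X$-fold point, $\mathbf r$ is empty. Your treatment of $b>1$ (``use genericity, or insert $b-1$ generators first and recheck the count'') is never carried out. For $b=1$, the appeal to Puiseux stages ``contributing a positive braid connecting all strands'' is not an argument.

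The missing idea is a short combinatorial count that needs no Puiseux analysis. Your naive scheme overshoots by $2(m_X-b)$. Each letter of $\mathbf r$ that you keep, instead of deleting it and later reinserting the same generator, saves two bands. So you must keep $m_X-b$ letters of $\mathbf r$ with pairwise distinct generators, and this is always possible:

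\begin{enumerate}
\item Since $\Delta^2$ is pure, the permutation of $\mathbf r$ has exactly $b$ cycles.
\item Every cycle lies in one connected component of the graph on $\{1,\ldots,m_X\}$ having an edge $\{i,i+1\}$ for each $\sigma_i$ occurring in $\mathbf r$.
\item If $r$ distinct generators occur, this graph has $m_X-r$ components, hence $r\geq m_X-b$.
\end{enumerate}

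Now keep one occurrence of each of $m_X-b$ of these generators and delete the other $\ell(\mathbf r)-(m_X-b)$ letters. Then append each of the $b-1$ missing generators once.

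The resulting word $\boldsymbol\gamma$ contains every Artin generator exactly once. Humphreys' Coxeter-element argument uses only commutations and cyclic conjugation, so it applies in the braid group and shows $\boldsymbol\gamma$ is conjugate to $\sigma_1\cdots\sigma_{m_X-1}$. Since $\Delta^2$ is central, the closure of $\Delta^2\boldsymbol\gamma$ is $T(m_X,m_X+1)$.

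The band count is then $s=\ell(\mathbf r)-(m_X-b)+(b-1)$, so $2g(C)=\ell(\mathbf r)-(m_X-b)=2\delta(W,0)-m_X(m_X-1)$, as required. For $b=1$ the same count shows that every generator occurs in $\mathbf r$, which is exactly what you were trying to extract from the Puiseux structure.
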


\begin{proof}
By \eqref{eq:section-invariants}, $W$ has multiplicity $m_X$ and so its link $\Lambda_W$ can be represented by a closed braid
on $m_X$ strands. We recall this construction from
\cite[Section~8.3]{BrieskornKnorrer}.
Choose linear coordinates $(\xi,\zeta)$ on
$H_0=\{\lambda_1=0\}$ such that $\{\xi=0\}$ is not a tangent line
of $W$. Then $\xi|_W$ is finite of degree
\[
 i_0(W,\{\xi=0\})=\mult_0(W)=m_X.
\]
After rescaling $\zeta$, choose a bidisk of radius
$\rho>0$ small enough that $W$ meets its boundary
only in the interior of the solid torus
$
 \mathcal T=\{|\xi|=\rho,\ |\zeta|\leq\rho\}.
$
The restriction
$
 \xi\colon W\cap\mathcal T\longrightarrow\{|\xi|=\rho\}
$
is an orientation preserving covering of degree $m_X$, so
$W\cap\mathcal T$ is the closure $\widehat\beta$ of a braid
$\beta\in B_{m_X}$.
Moreover, by \cite[Section~8.3, Corollary~7]{BrieskornKnorrer},
$\widehat\beta$ and $\Lambda_W$ have the same oriented link type.
The \emph{full twist} on $m_X$ strands is the braid
\[
 \Delta_{m_X}^2=(\sigma_1\cdots\sigma_{m_X-1})^{m_X},
\]
obtained by rotating the configuration of $m_X$ points in the disk
one full positive turn as one moves along the braid.
By \cite[Lemma~13.5 and its proof]{Fomin}, we may choose $\beta$
so that $\Delta_{m_X}^{-2}\beta$ is positive.
For this choice of $\beta$, fix an orientation preserving diffeomorphism of
pairs $(\mathbb{S}^3,\widehat\beta)\cong(\mathbb{S}_W,\Lambda_W)$.
Let $\mathbf d$ be the displayed positive word representing
$\Delta_{m_X}^2$, and let $\mathbf w$ be a positive word representing
$\Delta_{m_X}^{-2}\beta$. Then
\[
 \mathbf a=\mathbf d\mathbf w
\]
is a positive word representing $\beta$.
The cycles of the permutation associated with $\beta$ correspond to the
$b$ connected components of $\Lambda_W$. Since the full twist induces the identity
permutation, the permutation of the braid represented by $\mathbf w$
also has $b$ cycles.
Let $r$ be the number of different Artin generators that appear in $\mathbf w$. Form the
graph with vertices $1,\ldots,m_X$ and an edge $\{i,i+1\}$
for each generator $\sigma_i$ occurring in $\mathbf w$.
This graph is obtained from the chain $1-2-\cdots-m_X$ by deleting
$m_X-1-r$ edges, so it has $m_X-r$ connected components. Every generator that appears in
$\mathbf w$, preserves these sets of vertices, so every cycle of the
permutation defined by $\mathbf w$ lies in one of them. Therefore,
\[
 b\geq m_X-r \quad \text{ and } \quad r\geq m_X-b.
\]
Choose $m_X-b$ different generators appearing in the word $\mathbf w$,
and retain one occurrence of each in the order in which they appear.
The resulting subword $\boldsymbol\gamma_0$ has length $m_X-b$.
Let $\boldsymbol\eta$ be any word containing exactly once and  in any order, each of the
$b-1$ generators (among $\sigma_1,\ldots,\sigma_{m_X-1}$) that do not
appear in $\boldsymbol\gamma_0$. Then the word
\[
 \boldsymbol\gamma=\boldsymbol\gamma_0\boldsymbol\eta
\]
has every Artin generator appearing exactly once. The words
$\boldsymbol\eta$ and $\boldsymbol\gamma_0$ are empty when $b=1$ and
$b=m_X$, respectively.  Humphreys' conjugacy argument for Coxeter elements
\cite[\S~3.16]{HumphreysCoxeter} applies here and shows that the braid represented by
$\boldsymbol\gamma$ is conjugate to $c=\sigma_1\cdots\sigma_{m_X-1}$.
Since the full twist $\Delta_{m_X}^2=c^{m_X}$ is central, the braid
represented by $\mathbf d\boldsymbol\gamma$ is conjugate to
$c^{m_X+1}$. Its closure is therefore isotopic to $T(m_X,m_X+1)$. In the example of \Cref{fig:braid-movie},
$m_X=4$, $\mathbf w=\sigma_1^3\sigma_3^3$, and $b=2$, so we choose
$\boldsymbol\gamma_0=\sigma_1\sigma_3$ and $\boldsymbol\eta=\sigma_2$. 

Now, we construct the cobordism by deleting and inserting letters.
By \Cref{rem:braid-letter-cobordism}, the following letter operations can be
realized by oriented band attachments on the braid closures. 
\[
 \mathbf d\mathbf w
 \longrightarrow\mathbf d\boldsymbol\gamma_0
 \longrightarrow\mathbf d\boldsymbol\gamma_0\boldsymbol\eta.
\]
We recall that we view these band attachments in
$\mathbb S_W$ via the identification fixed at the beginning of this proof. The first arrow deletes the letters of $\mathbf w$ not retained in
$\boldsymbol\gamma_0$. The second arrow represents the following operation: for each successive letter
$\sigma_i$ of $\boldsymbol\eta$, attach, below the current braid, the
standard oriented band with one positive half twist between the strands
occupying positions $i$ and $i+1$. These $b-1$ insertions are illustrated
in \Cref{fig:braid-movie}. Let $K$ be the closure of the braid
represented by $\mathbf d\boldsymbol\gamma$. And let
$C_0\subset\mathbb S_W\times[0,1]$ be the surface traced by these band
attachments, from $\Lambda_W\times\{0\}$ to $K\times\{1\}$.
Since $K$ is a knot, \Cref{rem:braid-letter-cobordism} shows that $C_0$
is connected.
The closures of conjugate braids are ambient isotopic
\cite[Theorem~1.2.5]{Kawauchi}. So there is an
ambient isotopy
$h_t:\mathbb S_W\to\mathbb S_W$, $t\in[0,1]$, stationary near the endpoints,
with $h_0=\mathrm{id}$ and $h_1(K)=T(m_X,m_X+1)$ as oriented knots.
The isotopy defines the annulus
\[
 A=\{(h_t(x),1+t):x\in K,\ t\in[0,1]\}
 \subset\mathbb S_W\times[1,2].
\]
Glue $A$ to $C_0$ along their common boundary $K\times\{1\}$. After rescaling $[0,2]$ to
$[0,1]$, denote this connected oriented link cobordism by $C$.

Let $S_{\mathbf a}\subset\mathbb S^3$ denote the word surface of
$\mathbf a=\mathbf d\mathbf w$. We see $S_{\mathbf a}$ in $\mathbb S_W$ by the identification $\mathbb S^3 \to \mathbb S_W$ fixed at the beginning of this proof.
The word $\mathbf d$ contains every Artin generator and $\mathbf a$ is positive,
Stallings' theorem \cite[Theorem~2]{Stallings} shows that $S_{\mathbf a}$
is a connected fiber surface for $\Lambda_W$.
The surface $S_{\mathbf a}$ and a page of the Milnor fibration of $(W,0)$
are therefore fiber surfaces for the same oriented link.
By \cite[Proposition~2.19(3)]{Rudolph}, these surfaces are isotopic and
have the same Euler characteristic.
The surface $S_{\mathbf a}$ is constructed from $m_X$ disks by attaching
$\ell(\mathbf d)+\ell(\mathbf w)$ bands. Since
$\ell(\mathbf d)=m_X(m_X-1)$, we obtain
\[
 \begin{aligned}
  \chi(S_{\mathbf a})
  &=m_X-\ell(\mathbf d)-\ell(\mathbf w)\\
  &=m_X-m_X(m_X-1)-\ell(\mathbf w)\\
  &=1-\mu(W)=b-2\delta(W,0),
 \end{aligned}
\]
where the last equality follows from Milnor's formula. On the other hand, the construction of $C_0$ uses $\ell(\mathbf w)-(m_X-b)$ bands for
the deletions of letters, and $b-1$ bands for the insertions. That is a total of
\[
 s=\ell(\mathbf w)-(m_X-b)+(b-1)
\]
bands. So $\chi(C_0)=-s$. Gluing the annulus $A$
preserves the Euler characteristic, hence $\chi(C)=\chi(C_0)=-s$.
Since $C$ is connected and has $b+1$ boundary components, we obtain
\[
 2g(C)=s-b+1
 =\ell(\mathbf w)-(m_X-b)
 =2\delta(W,0)-m_X(m_X-1).
\]
Dividing by $2$ proves the formula in the statement.
\end{proof}
\begin{figure}[!ht]
\centering
\begingroup
\footnotesize
\graphicspath{{./}}
\def\svgwidth{\linewidth}
\input{braid_movie_labels.tex}
\par
\endgroup
\caption{Example of the strategy used in the proof of
\Cref{prop:braid} for $m_X=4$. Consider the word $\mathbf w=\sigma_1^3\sigma_3^3$
for this example. Each diagram is read from top to bottom, in the direction of the
strands, and all crossings are positive.  The box stands for the word
$\mathbf d=(\sigma_1\sigma_2\sigma_3)^4$ representing $\Delta_4^2$. In
(a), (b) and (c), the  letters $\sigma_1$ and $\sigma_3$ are
drawn at the same heights. In (a), the bracket marks
$\boldsymbol\gamma_0=\sigma_1\sigma_3$, and attaching the four pale yellow
bands deletes the other letters of $\mathbf w$. In (b), attaching the
band with a positive half twist $\boldsymbol\eta=\sigma_2$.
Since $\sigma_1\sigma_3=\sigma_3\sigma_1$ and $\mathbf d$ represents a
central braid, the word
$\sigma_3^{-1}\mathbf d\boldsymbol\gamma_0\boldsymbol\eta\,\sigma_3$
represents the same braid as $(\sigma_1\sigma_2\sigma_3)^5$, whose
closure is $T(4,5)$. Here $b=2$, and the cobordism $C$ obtained from the
$s=5$ band attachments and the isotopy from (c) to (d) has $3$ boundary
components. So $\chi(C)=-5$ and $g(C)=2$.}
\label{fig:braid-movie}
\end{figure}
\subsection{The exterior cobordism}
\label{subsec:exterior-cobordism}

For the ball $\mathbb{B}$ and $\eta$ fixed in
\Cref{subsec:general-plane-sections}, we set
\[
 \Delta_\eta=\{s\in\C:|s|<\eta\},\qquad
 \overline\Delta_\eta=\{s\in\C:|s|\leq\eta\},\qquad
 \mathbb{B}_{\mathrm{out},s}=\mathbb{B}\cap H_s.
\]
In particular, each $\mathbb{B}_{\mathrm{out},s}$ is a closed Euclidean $4$-ball. Denote $\mathbb{S}_s=\partial \mathbb{B}_{\mathrm{out},s}$ and $
 \Lambda_s=W_s\cap \mathbb{S}_s.
$
By \Cref{subsec:general-plane-sections}, $W_s\cap\partial\mathbb B$
lies in $X_{\mathrm{reg}}$, and $H_s$ meets
$X_{\mathrm{reg}}\cap\partial\mathbb B$ transversely for $|s|\leq\eta$.
After decreasing $\eta$ if necessary, $H_s$ also meets
$\partial\mathbb B$ transversely.
Ehresmann's lemma for pairs therefore gives an orientation preserving trivialization
\[
 \bigl(\partial\mathbb B\cap\lambda_1^{-1}(\overline\Delta_\eta),
 X\cap\partial\mathbb B\cap\lambda_1^{-1}(\overline\Delta_\eta)\bigr)
 \cong (\mathbb S_0,\Lambda_0)\times\overline\Delta_\eta
\]
over the disk $\overline\Delta_\eta$, where the map to
the base on the left is $\lambda_1$. In particular, this gives
orientation preserving identifications
\begin{equation}
 \label{eq:boundary-identification}
 (\mathbb S_s,\Lambda_s)\cong(\mathbb S_0,\Lambda_0)
 =(\mathbb S_W,\Lambda_W)
 \qquad\text{for }s\in\overline\Delta_\eta.
\end{equation}
We fix such a trivialization. After maybe decreasing $\eta$, the conclusions
of \Cref{prop:plane-sections} hold for $0<|s|\leq\eta$ and the points $Q_j(s)$ lie in the interior of
$\mathbb{B}_{\mathrm{out},s}$.
Choose $s_0\in\Delta_\eta \setminus \{0\}$ small enough. By
(\ref{cond:isolated-critical-point}), $s_0$ is a regular value of $v$. Put
\[
 Q_j=Q_j(s_0)\qquad\text{for }1\leq j\leq N.
\]
For each $j$, choose an Euclidean Milnor ball $\mathbb{B}_j$ centered at
$Q_j$, with the balls pairwise disjoint and contained in
the interior of $\mathbb{B}_{\mathrm{out},s_0}$, and set
\[
 K_{Q_j}=W_{s_0}\cap\partial \mathbb{B}_j.
\]
After choosing an orientation preserving identification
$\partial \mathbb{B}_j\cong \mathbb{S}^3$, we see $K_{Q_j}$ as an oriented knot in
$\mathbb{S}^3$. It is a knot because $(W_{s_0},Q_j)$ is unibranch
(recall \Cref{prop:plane-sections}).
Let $K_{\mathrm{tr}}\subset \mathbb{S}^3$ be a representative of the
oriented connected-sum knot type
\begin{equation}
 K_{\mathrm{tr}}
 \simeq K_{Q_1}\mathbin{\#}\cdots\mathbin{\#}K_{Q_N}.
 \label{eq:transverse-knot}
\end{equation}
The subscript ``$\mathrm{tr}$'' refers to the
{\em transverse} plane curve germs $(W_{s_0},Q_j)$.

In the next proposition, we construct a link cobordism from $K_{\mathrm{tr}}$ to
$\Lambda_{s_0}$, realized as a subsurface of $W_{s_0}$ inside a
$4$-manifold diffeomorphic to $\mathbb S^3\times[0,1]$.
This manifold is obtained by removing from $\mathbb B_{\mathrm{out},s_0}$
the interior of a ball formed by joining the local Milnor balls
along the thickening of an embedded tree.
For similar constructions, see \cite[proof of Lemma~2.1]{BaaderFeller}
and \cite[proof of Lemma~2.3 in the arXiv version]{BorodzikLivingston}.

\begin{proposition}
\label{prop:exterior}
There is a connected smooth oriented link cobordism $(Z,F_W)$
from $(\mathbb{S}^3,K_{\mathrm{tr}})$ to
$(\mathbb{S}_{s_0},\Lambda_{s_0})$ such that $Z$ is orientation preserving
diffeomorphic to $\mathbb{S}^3\times[0,1]$ and
\[
 \chi(F_W)=-\mu(V).
\]
\end{proposition}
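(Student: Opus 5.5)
The plan is to build $Z$ inside the $4$-ball $\mathbb B_{\mathrm{out},s_0}$ by removing a small ball around the singular points of $W_{s_0}$. The cobordism surface will be the part of the curve $W_{s_0}$ lying outside that ball, and its Euler characteristic comes from \Cref{rem:milnor-fibers} via the normalization $\boldsymbol n_{s_0}$.

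\textbf{Step 1: the inner ball.} Choose an embedded tree $T\subset H_{s_0}\cap\operatorname{int}\mathbb B_{\mathrm{out},s_0}$ with $N-1$ edges. Each edge is a smooth arc joining $\partial\mathbb B_j$ to $\partial\mathbb B_k$. The arcs avoid the other balls and meet $W_{s_0}$ only at their endpoints, which lie on the knots $K_{Q_j}$. This is possible since $W_{s_0}$ has real codimension $2$ in the $4$-dimensional $H_{s_0}$, and arcs in general position miss it. Then let $\mathbb B_{\mathrm{in}}$ be a regular neighbourhood of $\bigcup_j\mathbb B_j\cup T$; after smoothing corners it is a boundary connected sum of $4$-balls, hence a $4$-ball. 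Thicken each edge to a $1$-handle $D^1\times D^3$ that meets $W_{s_0}$ in a band $D^1\times D^1$ joining $K_{Q_j}$ and $K_{Q_k}$. Then $W_{s_0}\cap\partial\mathbb B_{\mathrm{in}}$ is obtained from $\bigsqcup_j K_{Q_j}\subset\bigsqcup_j\partial\mathbb B_j$ by boundary connected sum along these bands. This is the standard realization of the oriented connected sum, as in \cite[proof of Lemma~2.1]{BaaderFeller}. So after an identification $\partial\mathbb B_{\mathrm{in}}\cong\mathbb S^3$ it represents $K_{\mathrm{tr}}$ as in \eqref{eq:transverse-knot}. We choose the bands so that the orientations match, which is possible because near each endpoint $W_{s_0}$ is a complex curve with its canonical orientation.

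\textbf{Step 2: the pair $(Z,F_W)$.} Set $Z=\mathbb B_{\mathrm{out},s_0}\setminus\operatorname{int}\mathbb B_{\mathrm{in}}$. By the annulus theorem in dimension $4$ (smoothly, for a ball nested in a ball with the inner ball standard), $Z\cong\mathbb S^3\times[0,1]$. In fact this is easier here: $\mathbb B_{\mathrm{in}}$ can be shrunk by an isotopy to a small round ball, and the region between concentric round balls is a product. The isotopy is only of the ambient ball, so the surface is not affected. We orient $Z$ as a subset of $H_{s_0}\cong\C^2$, so that $\partial Z=-\partial\mathbb B_{\mathrm{in}}\sqcup\mathbb S_{s_0}$. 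Let $F_W=W_{s_0}\cap Z$. This is a smooth surface: the singular points $Q_j$ lie inside $\mathbb B_{\mathrm{in}}$, and $W_{s_0}$ is transverse to $\partial\mathbb B_j$ (Milnor balls), to the band parts, and to $\mathbb S_{s_0}$ (\Cref{subsec:exterior-cobordism}). It is properly embedded and carries the complex orientation. Its boundary is $-K_{\mathrm{tr}}\sqcup\Lambda_{s_0}$ with the conventions of \Cref{rem:boundary-orientations}.

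\textbf{Step 3: Euler characteristic and connectedness.} Pull everything back by $\boldsymbol n_{s_0}$, which by \Cref{prop:plane-sections} is a homeomorphism $V_{s_0}\to W_{s_0}$. Then $\boldsymbol n_{s_0}^{-1}(W_{s_0}\cap\mathbb B_j)$ is a disk $D_j$ for each $j$ (conic structure of the irreducible germ). The preimage of each band is a band $D^1\times D^1$. Hence $\boldsymbol n_{s_0}^{-1}(W_{s_0}\cap\mathbb B_{\mathrm{in}})$ is a disk: $N$ disks joined by $N-1$ bands along a tree, so it has Euler characteristic $N-(N-1)=1$ and is connected. \Cref{rem:milnor-fibers} gives that $V_{s_0}$ is connected with $\chi(V_{s_0})=1-\mu(V)$. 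Removing an open disk from it yields
\[
 \chi(F_W)=1-\mu(V)-1=-\mu(V).
\]
Removing a disk from a connected surface leaves it connected, so $F_W$ is connected.

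The main obstacle is the careful justification in Step 1. We must check that the band sum realized inside $\partial\mathbb B_{\mathrm{in}}$ is indeed the oriented connected sum, independent of the choice of arcs. This holds because connected sum of knots is well defined, and the bands are trivial in the sense of lying in a neighbourhood of arcs that meet the knots only at endpoints. We also need that the identification of $Z$ with $\mathbb S^3\times[0,1]$ is orientation preserving, and that $\partial\mathbb B_{\mathrm{in}}\cong\mathbb S^3$ is chosen compatibly with the orientations of the $\partial\mathbb B_j$. I would handle both by fixing the orientations induced by $H_{s_0}$ throughout.
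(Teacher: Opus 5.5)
\textbf{Gap in Step~1.} Step~1 contradicts itself. You put the tree arcs in general position so that they meet $W_{s_0}$ only at their endpoints. You then assert that the thickened $1$-handles $D^1\times D^3$ meet $W_{s_0}$ in bands joining $K_{Q_j}$ to $K_{Q_k}$. But $W_{s_0}$ is a fixed complex curve: you cannot attach bands to it, only intersect it with your handles. A thin regular neighbourhood of an arc whose interior misses $W_{s_0}$ meets $W_{s_0}$ only in small pieces near the two endpoints.

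With your $\mathbb B_{\mathrm{in}}$, the link $W_{s_0}\cap\partial\mathbb B_{\mathrm{in}}$ is therefore the split link $K_{Q_1}\sqcup\cdots\sqcup K_{Q_N}$, since the middle spheres of the tubes separate the components. It is not $K_{\mathrm{tr}}$. Correspondingly, $W_{s_0}\cap Z$ is $V_{s_0}$ minus $N$ disks, with $\chi=1-\mu(V)-N$. Patching this with $N-1$ fusion bands in a collar gives $\chi=2-2N-\mu(V)\neq-\mu(V)$ for $N\geq2$, i.e.\ genus $\delta(V,0)+N-1$, which would weaken \Cref{cor:upsilon-comparison}. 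Your Step~3 tacitly assumes the opposite of Step~1 when it says that the preimage of each band is a band in $V_{s_0}$.

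\textbf{The missing idea.} The core arcs must run \emph{inside} the curve. Choose embedded arcs $\alpha_i$, $1\leq i\leq N-1$, in the smooth part of $W_{s_0}$ outside $\bigcup_j\operatorname{int}\mathbb B_j$, with endpoints on the knots $K_{Q_j}$ and tree-shaped incidence graph. They exist because $W_{s_0}\setminus\bigcup_j\operatorname{int}\mathbb B_j$ is homeomorphic via $\boldsymbol n_{s_0}$ to $V_{s_0}\setminus\bigcup_j\operatorname{int}D_j$. That surface is connected because $V_{s_0}$ is (\Cref{rem:milnor-fibers}). This is where connectedness is really needed, not only at the end.

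Then take the $1$-handles $I\times D^3$ to be tubular neighbourhoods of the $\alpha_i$ adapted to $W_{s_0}$. They should meet $W_{s_0}$ exactly in bands $I\times[-1,1]$, and meet $\bigcup_j\mathbb B_j$ exactly in $\partial I\times D^3$. Each attaching $3$-ball then meets the knot in an unknotted arc, and the tube $I\times S^2$ meets $W_{s_0}$ in the two sides of the band. This is precisely the connected sum of pairs.

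This is the paper's construction. With it, and after rounding the corners of $\mathbb B_{\mathrm{in}}$ so that its boundary is transverse to $W_{s_0}$, your Steps~2 and~3 go through as written. That includes the disk-theorem argument for $Z\cong\mathbb S^3\times[0,1]$ and the Euler characteristic count.
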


\begin{proof}
By \Cref{rem:milnor-fibers}, $V_{s_0}$ is connected
and $\chi(V_{s_0})=1-\mu(V,0)$.
By \Cref{prop:plane-sections}, the  preimages
\[
 D_j=\boldsymbol n_{s_0}^{-1}(W_{s_0}\cap\mathbb B_j),
 \qquad\text{for }1\leq j\leq N,
\]
are pairwise disjoint disks in the interior of $V_{s_0}$.
Set $I=[0,1]$. Removing the interiors of the $D_j$ does not
disconnect the surface, so we can choose smooth embedded arcs
$\alpha_i:I\to W_{s_0}$, for $1\leq i\leq N-1$, with pairwise
disjoint images in the smooth locus, such that the lifted arcs
$\boldsymbol n_{s_0}^{-1}(\alpha_i(I))$ meet the circles
$\partial D_j$ transversely at their endpoints and have interiors contained in
$\operatorname{int}(V_{s_0})\setminus\bigcup_jD_j$.
Moreover, we choose the arcs so that their incidence graph with the disks $D_j$
is a tree.
Let $D^3$ be the closed unit disk in $\mathbb R^3$. Identify
$[-1,1]$ with the diameter $[-1,1]\times\{(0,0)\}\subset D^3$ for this construction.
Choose smooth embeddings
\begin{equation}
 \widehat\alpha_i:I\times D^3\longrightarrow
 \operatorname{int}(\mathbb B_{\mathrm{out},s_0}),
 \label{eq:product-neighborhoods}
\end{equation}
with pairwise disjoint images, such that for each $i$ the following holds:
\begin{enumerate}
\setlength{\itemsep}{2pt}
\setlength{\parsep}{0pt}
\renewcommand{\labelenumi}{(\roman{enumi})}
\item $\widehat\alpha_i(t,0)=\alpha_i(t)$ for every $t\in I$.
\item The intersection with $W_{s_0}$ is the band 
$ \widehat\alpha_i(I\times D^3)\cap W_{s_0}
 =\widehat\alpha_i(I\times[-1,1]).
$
And the restriction of $\widehat\alpha_i$ to $I\times[-1,1]$
preserves the orientation of $W_{s_0}$.
\item The intersection with the balls consists of the two boundary disks, that is,
\[
 \widehat\alpha_i(I\times D^3)\cap\bigcup_{j=1}^N\mathbb B_j
 =\widehat\alpha_i(\partial I\times D^3).
\]
\end{enumerate}
Let $\mathbb B_{\mathrm{in}}$ be the union of the $\mathbb B_j$ and
the images $\widehat\alpha_i(I\times D^3)$ and set
$
 D_{\mathrm{in}}=W_{s_0}\cap\mathbb B_{\mathrm{in}}.
$
Since the incidence graph is a tree, the ball $\mathbb B_{\mathrm{in}}$ is a
boundary connected sum of the balls $\mathbb B_j$, and
$\boldsymbol n_{s_0}^{-1}(D_{\mathrm{in}})$ is a boundary connected sum
of the disks $D_j$. Since $\boldsymbol n_{s_0}$ is a homeomorphism,
$D_{\mathrm{in}}$ is a topological disk with boundary corners.
See \Cref{fig:normalization-tree-surface}.
\begin{figure}[!ht]
\centering
\begingroup
\footnotesize
\graphicspath{{./}}
\def\svgwidth{\linewidth}
\input{normalization_tree_surface_labels.tex}
\par
\endgroup
\caption{In yellow, the disk
$\boldsymbol n_{s_0}^{-1}(D_{\mathrm{in}})\subset V_{s_0}$ (left) and its image $D_{\mathrm{in}}\subset W_{s_0}$
(right). Here  $N=4$.}
\label{fig:normalization-tree-surface}
\end{figure}

Now we explain how to round the boundary corners of $D_{\mathrm{in}}$ in
$W_{s_0}$. Fix a corner point
$p\in W_{s_0}\cap\widehat\alpha_i(\partial I\times\partial D^3)$.
Using the product neighborhoods \eqref{eq:product-neighborhoods}, choose real ambient coordinates
$(x,y,z_1,z_2)$ centered at $p$ in which $W_{s_0}$ is locally given
by $z_1=z_2=0$ and $\mathbb B_{\mathrm{in}}$ by $x\leq0$ or $y\leq0$.
Fix  $a>0$ small enough and define the auxiliary smooth function
$h:\mathbb R\to\mathbb R$ to construct a parametrization
$\kappa_a:I\to\mathbb R^2$  by
\[
 h(t)=
 \begin{cases}
  0,&t\leq0,\\
  e^{1-1/t},&t>0,
 \end{cases}
 \qquad
 \kappa_a(t)=a\bigl(h(t),h(1-t)\bigr).
\]
The map $\kappa_a$ has nonvanishing derivative
$\kappa_a'(t)=a\bigl(h'(t),-h'(1-t)\bigr)$ and parametrizes a smooth
arc from $(0,a)$ to $(a,0)$ that (since all derivatives of $h$ vanish
at $0$) joins the two axes smoothly and replaces the corner at $p$
(see \Cref{fig:corner-rounding}).
Using the product structure in \eqref{eq:product-neighborhoods},
we extend the rounding in the $(z_1,z_2)$ directions to round
$\mathbb B_{\mathrm{in}}$, whose boundary is then transverse to $W_{s_0}$.
We retain the notation $\mathbb B_{\mathrm{in}}$ and $D_{\mathrm{in}}$
after rounding. Then $\mathbb B_{\mathrm{in}}$ is a smooth $4$-ball in
$\operatorname{int}(\mathbb B_{\mathrm{out},s_0})$, and
$\boldsymbol n_{s_0}^{-1}(D_{\mathrm{in}})$ is a smooth disk in
$\operatorname{int}(V_{s_0})$.
\begin{figure}[!ht]
\centering
\begingroup
\footnotesize
\graphicspath{{./}}
\def\svgwidth{0.95\linewidth}
\input{corner_rounding_labels.tex}
\par
\endgroup
\caption{Rounding of a corner at the level of $D_\mathrm{in}$}
\label{fig:corner-rounding}
\end{figure}

\begin{figure}[!ht]
\centering
\begingroup
\footnotesize
\graphicspath{{./}}
\def\svgwidth{0.95\linewidth}
\input{inner_tree_neighborhood_labels.tex}
\par
\endgroup
\caption{On the left we see
$\mathbb B_{\mathrm{in}}\subset\mathbb B_{\mathrm{out},s_0}$. And on the right we see an schematic picture of one handle realizing $K_{Q_1}\mathbin{\#}K_{Q_2}$.}
\label{fig:inner-tree-neighborhood}
\end{figure}

Each handle realizes the oriented connected sum on the boundary
(see \Cref{rem:tree-connected-sum}), giving an orientation
preserving diffeomorphism $
(\partial\mathbb B_{\mathrm{in}},
\partial D_{\mathrm{in}})
\cong(\mathbb S^3,K_{\mathrm{tr}})
$ (see \Cref{fig:inner-tree-neighborhood}). Set
\[
 Z=\mathbb B_{\mathrm{out},s_0}\setminus
 \operatorname{int}(\mathbb B_{\mathrm{in}}),\qquad
 F_W=W_{s_0}\cap Z.
\]
Identify (preserving the orientation) $\mathbb B_{\mathrm{out},s_0}$ with the closed unit $4$-ball
$D^4$. Since $\mathbb B_{\mathrm{in}}$ is a smooth $4$-ball lying in
the interior of $D^4$, we can always find an ambient diffeotopy fixed near $\partial D^4$ that carries
$\mathbb B_{\mathrm{in}}$ to the concentric ball $\tfrac12D^4$
(see for example \cite[Chapter~8, Theorem~3.1]{Hirsch} for a proof of this standard fact). So there are
orientation preserving diffeomorphisms
\[
 Z\cong D^4\setminus\operatorname{int}(\tfrac12D^4)
 \cong\mathbb S^3\times[0,1].
\]
Since $\mathbb B_{\mathrm{in}}$ contains all the singular points of
$W_{s_0}$, the map $\boldsymbol n_{s_0}$ identifies $F_W$ diffeomorphically with
$V_{s_0}\setminus\operatorname{int}(\boldsymbol n_{s_0}^{-1}(D_{\mathrm{in}}))$, preserving
orientations. Thus $F_W$ is connected, with oriented boundary
\[
 \partial F_W=-K_{\mathrm{tr}}\sqcup\Lambda_{s_0}.
\]
When we remove the interior of the disk
$\boldsymbol n_{s_0}^{-1}(D_{\mathrm{in}})$ from $V_{s_0}$
 the Euler characteristic is decreased by one, so
\[
 \chi(F_W)=\chi(V_{s_0})-1=(1-\mu(V,0))-1=-\mu(V,0). \qedhere
\]  
\end{proof}

\begin{remark}
\label{rem:tree-connected-sum}
We emphasize that the irreducibility of the germs $(W_{s_0},Q_j)$
is crucial for the {\em embedded realization} of the connected sum
\eqref{eq:transverse-knot}. It ensures that each $K_{Q_j}$ is a
knot and each $D_j$ is a disk. In \Cref{prop:exterior}, each handle
attachment along the tree removes a $3$-ball containing a trivial
arc from each of two boundary knot pairs and joins the knot
endpoints by the sides of the oriented band. The tree of handles and balls realizes the oriented connected sum on
$\partial\mathbb B_{\mathrm{in}}$ and joins the disks $D_j$ into
the single disk $\boldsymbol n_{s_0}^{-1}(D_{\mathrm{in}})$.

If one of the germs was reducible, then
$D_j=\boldsymbol n_{s_0}^{-1}(W_{s_0}\cap\mathbb B_j)$ would
consist of several disks. The $N-1$ bands would leave
$\boldsymbol n_{s_0}^{-1}(D_{\mathrm{in}})$ disconnected and the incoming boundary would
have several components. Adding further ambient $1$-handles to connect
all these disks would create cycles in the graph of balls and
handles, giving the ambient neighborhood nonzero first homology and it would no longer be a $4$-ball.
\end{remark}

\begin{corollary}
\label{cor:genus-identities}
There is a connected smooth oriented knot cobordism $(Z',F)$
from $(\mathbb{S}^3,K_{\mathrm{tr}})$ to
$(\mathbb{S}_W,T(m_X,m_X+1))$, with $Z'$ orientation preserving
diffeomorphic to $\mathbb{S}^3\times[0,1]$, such that
\begin{equation}
 g(F)=\delta(V,0)+\delta(W,0)-\frac{m_X(m_X-1)}2.
 \label{eq:composite-genus}
\end{equation}
Moreover,
\begin{equation}
 g_3(K_{\mathrm{tr}})
 =\sum_{j=1}^N\delta(W_{s_0},Q_j)
 =\delta(W,0)-\delta(V,0).
 \label{eq:seifert-genus-identities}
\end{equation}
\end{corollary}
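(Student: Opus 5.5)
The plan is to glue the two cobordisms already constructed and then compute Euler characteristics. Start from $(Z,F_W)$ of \Cref{prop:exterior}, a cobordism from $(\mathbb S^3,K_{\mathrm{tr}})$ to $(\mathbb S_{s_0},\Lambda_{s_0})$. Then use the fixed boundary trivialization \eqref{eq:boundary-identification}, which identifies $(\mathbb S_{s_0},\Lambda_{s_0})$ with $(\mathbb S_W,\Lambda_W)$ preserving orientations. Finally take $C\subset\mathbb S_W\times[0,1]$ from \Cref{prop:braid}.

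Glue $Z$ to $\mathbb S_W\times[0,1]$ along this identification of the outgoing boundary of $Z$ with the incoming boundary of the product. Since $C$ is a product cylinder near level $0$ (\Cref{rem:braid-letter-cobordism}), and $F_W$ is a collar near $\partial Z$ by transversality, the surfaces glue smoothly along the $b$ circles of $\Lambda_W$. The resulting manifold $Z'$ is a union of two copies of $\mathbb S^3\times[0,1]$ along a boundary sphere, hence diffeomorphic to $\mathbb S^3\times[0,1]$. The orientation conventions of \Cref{rem:boundary-orientations} make the gluing orientation-consistent. The surface $F=F_W\cup C$ is connected because $C$ is connected, and its boundary is $-K_{\mathrm{tr}}\sqcup T(m_X,m_X+1)$.

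For the genus, gluing along circles gives
\[
 \chi(F)=\chi(F_W)+\chi(C)=-\mu(V)+\chi(C),
\]
with $\chi(C)=1-b-2g(C)$ since $C$ has $b+1$ boundary components. Then $F$ has two boundary components, so $2g(F)=-\chi(F)$. Substituting $\mu(V)=2\delta(V,0)-b+1$ and $g(C)=\delta(W,0)-m_X(m_X-1)/2$ gives
\[
 2g(F)=2\delta(V,0)-b+1+b-1+2\delta(W,0)-m_X(m_X-1),
\]
which is \eqref{eq:composite-genus}.

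For \eqref{eq:seifert-genus-identities}, the ingredients are as follows:
\begin{itemize}
\item Seifert genus is additive under connected sum, so $g_3(K_{\mathrm{tr}})=\sum_j g_3(K_{Q_j})$.
\item Each $K_{Q_j}$ is the link of an irreducible plane curve germ, hence fibered with Milnor fiber as fiber surface.
\item By \cite[Proposition~2.19(3)]{Rudolph}, $g_3(K_{Q_j})$ equals the genus of that Milnor fiber, namely $\mu(W_{s_0},Q_j)/2=\delta(W_{s_0},Q_j)$ by Milnor's formula with one branch.
\end{itemize}
The second equality is then exactly \eqref{eq:delta-specialization}.

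The only delicate point is the gluing step: checking that the orientations of the boundary identifications match, so that $F$ is a genuinely oriented smooth surface and $Z'$ is oriented as $\mathbb S^3\times[0,1]$. The Euler characteristic bookkeeping itself is routine.
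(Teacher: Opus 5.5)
Your proposal is correct and is essentially the paper's proof: you glue $F_W$ to $C$ via the boundary trivialization at $s=s_0$, note that attaching a product collar keeps $Z'\cong\mathbb S^3\times[0,1]$, and compute $\chi(F)=\chi(F_W)+\chi(C)=-2g(F)$ with Milnor's formula; the Seifert genus identity then follows from the Milnor fibers of the branches $(W_{s_0},Q_j)$ being fiber surfaces, additivity of $g_3$ under connected sum, and \eqref{eq:delta-specialization}. One small fix: connectedness of $F$ follows from both $F_W$ (by \Cref{prop:exterior}) and $C$ being connected and sharing the gluing circles, not from the connectedness of $C$ alone.
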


\begin{proof}
Consider the identification (preserving orientation) of
\eqref{eq:boundary-identification} at $s=s_0$ used to identify the outgoing boundary
pair $(\mathbb S_{s_0},\Lambda_{s_0})$ of $(Z,F_W)$ from
\Cref{prop:exterior} with
$(\mathbb S_0,\Lambda_0)=(\mathbb S_W,\Lambda_W)$.
Glue $F_W$ to the surface $C\subset\mathbb S_W\times[0,1]$ from
\Cref{prop:braid} along
$\Lambda_{s_0}\simeq\Lambda_W\times\{0\}$, where we observe that the induced
boundary orientations are opposite.
The ambient manifold is
\[
 Z'=Z\cup_{\mathbb{S}_{s_0}\simeq \mathbb{S}_W}(\mathbb{S}_W\times[0,1]).
\]
By \Cref{prop:exterior}, the cobordism $Z$ is diffeomorphic to
$\mathbb{S}^3\times[0,1]$. And attaching to it the product collar $\mathbb{S}_W\times[0,1]$ does not change its
diffeomorphism type. The surface $F=F_W\cup C$ is connected,
smooth and oriented, and its boundary is
$-K_{\mathrm{tr}}\sqcup T(m_X,m_X+1)$.
Since $C$ has $b+1$ boundary components,
\Cref{prop:braid,prop:exterior} we get
\[
 \begin{aligned}
 \chi(F)
 &=\chi(F_W)+\chi(C)\\
 &=-\mu(V,0)+1-b-2\left(\delta(W,0)-\frac{m_X(m_X-1)}2\right)\\
 &=-2\left(\delta(V,0)+\delta(W,0)-\frac{m_X(m_X-1)}2\right).
 \end{aligned}
\]
Where the last equality is using the Milnor formula
$\mu(V,0)=2\delta(V,0)-b+1$. The surface $F$ has two boundary
components, so its Euler characteristic is $\chi(F)=2-2g(F)-2=-2g(F)$, which, combined with the equality above, yields
\eqref{eq:composite-genus}.

The Milnor fiber of the irreducible plane curve
$(W_{s_0},Q_j)$ is a fiber surface for $K_{Q_j}$. Therefore it has one
boundary component and so its Euler characteristic is
$1- \mu(W_{s_0},Q_j)=1-2\delta(W_{s_0},Q_j)$. Since fiber surfaces realize Seifert
genus, we get $
 g_3(K_{Q_j})=\delta(W_{s_0},Q_j).
$
The additivity of the Seifert genus under the connected sum
\eqref{eq:transverse-knot} and the delta identity
\eqref{eq:delta-specialization} at $s=s_0$ prove
\eqref{eq:seifert-genus-identities}.
\end{proof}

\section{Knot Floer inequalities and proof of the Reduction Theorem}
\label{sec:numerical-estimates}

Knot Floer homology was introduced independently by Ozsv\'ath and
Szab\'o \cite{OzsvathSzaboKnots} and Rasmussen
\cite{RasmussenThesis}. Using this theory, Ozsv\'ath, Stipsicz, and
Szab\'o introduced the invariant $\Upsilon$, with the aim of obtaining
a family of additive knot invariants that give lower
bounds for the slice genus \cite[p.~367]{OSS}.
For every knot, the slice genus is bounded above by the Seifert genus since
pushing the interior of a Seifert surface into the four-ball gives
a slice surface of the same genus.
Here we combine the bound for the slice genus given by the $\Upsilon$ with the equality between the
Seifert genus of an algebraic knot and the delta invariant of the
corresponding plane branch.
For two knots joined by a smooth oriented cobordism in
$\mathbb S^3\times[0,1]$, the genus of the cobordism gives a bound
for the difference between their $\Upsilon$ invariants. This is the crucial property of this invariant that we use in this work.

In \Cref{cor:genus-identities}, we have constructed a cobordism from the
connected sum $K_{\mathrm{tr}}$ of the knots corresponding to the transverse type singularities, to the torus
knot $T(m_X,m_X+1)$, and we have computed its genus. We apply the
$\Upsilon$ bound to this surface to obtain the comparison in
\Cref{cor:upsilon-comparison}. We then use the known properties of
$\Upsilon$ for algebraic knots and its explicit formula for these
torus knots to prove the lower bounds on $m_{\mathrm{tr}}$ in
\Cref{thm:estimates}. We compare the second lower bound with the upper
bound from \Cref{prop:polar-estimate} and use the resolution inequalities
\eqref{eq:pointbounds} to prove \Cref{thm:reduction}.

\subsection{The \texorpdfstring{$\Upsilon$}{Upsilon} invariant and cobordism bounds}
\label{subsec:shifted-upsilon}

For a knot $K\subset \mathbb{S}^3$, let
\[
 \Upsilon_K\colon[0,2]\cap\mathbb Q\longrightarrow\mathbb R
\]
denote the restriction to rational parameters of the invariant of
Ozsv\'ath, Stipsicz, and Szab\'o \cite[Section~3]{OSS}.   Inspired by Tru\"ol \cite[Remark~4.14]{Truol}, we associate to
every knot $K\subset \mathbb{S}^3$ the function
\begin{equation}
 \Phi_K\colon[0,2]\cap\mathbb Q\longrightarrow\mathbb R,
 \qquad
 \Phi_K(\tau)=\Upsilon_K(\tau)+g_3(K)\tau.
 \label{eq:phi-definition}
\end{equation}
The invariant $\Upsilon$ is additive under
connected sum \cite[Proposition~1.8]{OSS}, and so is the Seifert genus. Therefore,
\begin{equation}
 \Phi_{K_1\mathbin{\#}K_2}(\tau)
 =\Phi_{K_1}(\tau)+\Phi_{K_2}(\tau).
 \label{eq:phi-additivity}
\end{equation}

Let $g_4(K)$ denote the {\em slice genus} of $K$, that is,
the minimum genus of a connected smooth oriented surface properly
embedded in $D^4$ with boundary $K$.
See te proof of
\cite[Proposition~3.7]{FellerKrcatovich} for a similar argument to the one we use here.

\begin{lemma}
\label{lem:phi-cobordism}
Let $K_1,K_2\subset \mathbb{S}^3$ be oriented knots and let $(Z,F)$ be an
oriented knot cobordism from $(\mathbb{S}^3,K_1)$ to $(\mathbb{S}^3,K_2)$. Suppose
that $Z$ is orientation preserving diffeomorphic to
$\mathbb{S}^3\times[0,1]$.
Then, for $\tau\in[0,1]\cap\mathbb Q$,
\[
 \bigl(g_3(K_2)-g_3(K_1)-g(F)\bigr)\tau
 \leq \Phi_{K_2}(\tau)-\Phi_{K_1}(\tau)
 \leq \bigl(g_3(K_2)-g_3(K_1)+g(F)\bigr)\tau.
\]
\end{lemma}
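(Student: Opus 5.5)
The plan is to reduce the lemma to the cobordism bound of Ozsv\'ath, Stipsicz and Szab\'o \cite[Theorem~1.11]{OSS}. That bound says that if $K_1,K_2\subset\mathbb S^3$ are joined by a genus $g$ smooth oriented cobordism in $\mathbb S^3\times[0,1]$, then
\[
 |\Upsilon_{K_2}(\tau)-\Upsilon_{K_1}(\tau)|\leq g\,\tau
 \qquad\text{for }\tau\in[0,1].
\]
Equivalently, by concordance invariance and additivity, $|\Upsilon_K(\tau)|\leq\tau g_4(K)$.

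\textbf{Step 1: transport to a product.} Fix an orientation preserving diffeomorphism $Z\cong\mathbb S^3\times[0,1]$. It restricts on the boundary to orientation preserving diffeomorphisms of $\mathbb S^3$. Every orientation preserving diffeomorphism of $\mathbb S^3$ is isotopic to the identity (Cerf), so it preserves oriented knot types. Absorbing the isotopies into collars, $F$ becomes a genus $g(F)$ oriented cobordism in $\mathbb S^3\times[0,1]$ from $K_1$ to $K_2$. The orientation conventions of \Cref{rem:boundary-orientations} must match those of \Cref{def:link-cobordisms}. This is the one place where care is needed, to avoid ending up with the mirror image or reversed knots.

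\textbf{Step 2: reduce to a slice surface.} If one prefers to use only the slice genus form of the bound, cap off the cobordism. Take $D^4$, remove a small ball meeting a slice surface of $K_1$, and reverse orientation to obtain a surface for $-K_1^{*}$ (the reverse mirror). Gluing gives a surface in $D^4$ bounded by $K_2\mathbin{\#}(-K_1^{*})$, of genus at most $g(F)$. To arrange this, perform the connected sum along an arc running across $F$; this band attachment keeps the surface connected and does not increase its genus. We then have $g_4(K_2\mathbin{\#}-K_1^{*})\leq g(F)$. Using $\Upsilon_{-K^{*}}=-\Upsilon_K$ and additivity \cite[Proposition~1.8]{OSS}, we obtain $|\Upsilon_{K_2}(\tau)-\Upsilon_{K_1}(\tau)|\leq g(F)\tau$ for $\tau\in[0,1]$.

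\textbf{Step 3: add the shift.} Finally, add $(g_3(K_2)-g_3(K_1))\tau$ to each side of
\[
 -g(F)\tau\leq\Upsilon_{K_2}(\tau)-\Upsilon_{K_1}(\tau)\leq g(F)\tau.
\]
By definition \eqref{eq:phi-definition}, this is exactly the claimed two-sided inequality for $\Phi_{K_2}-\Phi_{K_1}$.

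The main (and only mild) obstacle is the orientation bookkeeping in Steps 1 and 2. Everything else is formal.
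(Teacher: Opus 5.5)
Your proposal is correct and is essentially the paper's proof. The paper cuts $F$ along an arc joining its two boundary components to get $g_4\bigl(K_2\mathbin{\#}(-\overline{K_1})\bigr)\leq g(F)$. It then applies the slice-genus bound, with additivity and the mirror and reversal identities, to get $\bigl|\Upsilon_{K_2}(\tau)-\Upsilon_{K_1}(\tau)\bigr|\leq g(F)\tau$, and adds $\bigl(g_3(K_2)-g_3(K_1)\bigr)\tau$. Your Cerf remark in Step~1 is extra care that the paper leaves implicit.

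Two corrections to your Step~2. First, \cite[Theorem~1.11]{OSS} is the slice-genus form $|\Upsilon_K(\tau)|\leq\tau\, g_4(K)$, not the cobordism form, so Step~2 is the actual argument rather than an optional one. Second, drop its opening sentence about gluing in a slice surface of $K_1$: that would add $g_4(K_1)$ to the genus, and the arc cut alone does the job.
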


\begin{proof}
Write $\overline K$ for the mirror of $K$ with its induced orientation,
and denote by $-K$ the knot $K$ with the opposite orientation.
Fix an orientation preserving diffeomorphism identifying $Z$ with $\mathbb S^3\times[0,1]$. Now, cut $F$ along a properly embedded arc joining its two
boundary components and remove an open tubular neighborhood
of that arc. In this way, we get a surface of genus $g(F)$ with boundary
$K_2\mathbin{\#}(-\overline{K_1})$.
By \Cref{rem:boundary-orientations}, the incoming boundary pair
with its induced boundary orientations is $(-\mathbb S^3,-K_1)$. A reflection of $\mathbb S^3$ reverses its
orientation and sends $K_1$ to $\overline{K_1}$, giving an orientation
preserving diffeomorphism of pairs
\[
 (-\mathbb S^3,-K_1)\cong(\mathbb S^3,-\overline{K_1}).
\]
An arc that joins the two boundary components of $\mathbb S^3\times[0,1]$
is ambient isotopic to $\{p\}\times[0,1]$ for some $p\in\mathbb S^3$.
So removing an open tubular neighborhood yields $D^3\times[0,1]$,
which is diffeomorphic to $D^4$ after rounding corners. Hence
\[
 g_4(K_2\mathbin{\#}(-\overline{K_1}))\leq g(F).
\]
For $\tau\in[0,1]\cap\mathbb Q$, the bound in
\cite[Theorem~1.11]{OSS}, additivity and the identity for mirror knots
\cite[Propositions~1.8 and~1.9]{OSS} together with invariance under reversal
of the orientation of the knot (see \cite[proof of Theorem~3.5]{OSS}) give
\[
 \left|\Upsilon_{K_2}(\tau)-\Upsilon_{K_1}(\tau)\right|
 =\left|\Upsilon_{K_2\mathbin{\#}(-\overline{K_1})}(\tau)\right|
 \leq g_4(K_2\mathbin{\#}(-\overline{K_1}))\tau
 \leq g(F)\tau.
\]
Equivalently,
\[
 -g(F)\tau
 \leq\Upsilon_{K_2}(\tau)-\Upsilon_{K_1}(\tau)
 \leq g(F)\tau.
\]
By \eqref{eq:phi-definition},
\[
 \Phi_{K_2}(\tau)-\Phi_{K_1}(\tau)
 =\Upsilon_{K_2}(\tau)-\Upsilon_{K_1}(\tau)
  +\bigl(g_3(K_2)-g_3(K_1)\bigr)\tau.
\]
Adding $\bigl(g_3(K_2)-g_3(K_1)\bigr)\tau$ to all three terms of
the preceding inequalities therefore gives
\[
 \bigl(g_3(K_2)-g_3(K_1)-g(F)\bigr)\tau
 \leq \Phi_{K_2}(\tau)-\Phi_{K_1}(\tau)
 \leq \bigl(g_3(K_2)-g_3(K_1)+g(F)\bigr)\tau,
\]
which proves the statement.
\end{proof}

\begin{corollary}
\label{cor:upsilon-comparison}
For $\tau\in[0,1]\cap\mathbb Q$,
\[
 \Phi_{T(m_X,m_X+1)}(\tau)
 \leq\sum_{j=1}^N\Phi_{K_{Q_j}}(\tau)+(m_X-M_{\boldsymbol{n}})\tau.
\]
\end{corollary}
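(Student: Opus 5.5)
We apply \Cref{lem:phi-cobordism} once, to the knot cobordism $(Z',F)$ of \Cref{cor:genus-identities}, with $K_1=K_{\mathrm{tr}}$ and $K_2=T(m_X,m_X+1)$. The lemma requires a cobordism from $(\mathbb S^3,K_1)$ to $(\mathbb S^3,K_2)$ whose ambient manifold is orientation preserving diffeomorphic to $\mathbb S^3\times[0,1]$. \Cref{cor:genus-identities} supplies this, with the outgoing end $(\mathbb S_W,T(m_X,m_X+1))$ identified with $\mathbb S^3$ through the orientation preserving diffeomorphism fixed at the start of \Cref{sec:knot-cobordisms}. We use only the upper inequality of the lemma:
\[
 \Phi_{T(m_X,m_X+1)}(\tau)-\Phi_{K_{\mathrm{tr}}}(\tau)
 \leq\bigl(g_3(T(m_X,m_X+1))-g_3(K_{\mathrm{tr}})+g(F)\bigr)\tau .
\]

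The next step is to evaluate the coefficient on the right. The torus knot $T(m_X,m_X+1)$ is the link of the plane branch $\zeta^{m_X}=\xi^{m_X+1}$, whose delta invariant is $(m_X-1)m_X/2$. Since fiber surfaces realize the Seifert genus, $g_3(T(m_X,m_X+1))=m_X(m_X-1)/2$. Substituting this, \eqref{eq:seifert-genus-identities} for $g_3(K_{\mathrm{tr}})$, and \eqref{eq:composite-genus} for $g(F)$ gives
\[
 \frac{m_X(m_X-1)}2-\bigl(\delta(W,0)-\delta(V,0)\bigr)
 +\delta(V,0)+\delta(W,0)-\frac{m_X(m_X-1)}2
 =2\delta(V,0).
\]
By the definition \eqref{eq:section-invariants}, $2\delta(V,0)=m_X-M_{\boldsymbol n}$.

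Finally, $\Phi$ is additive under connected sum by \eqref{eq:phi-additivity}, and $K_{\mathrm{tr}}$ realizes the connected sum \eqref{eq:transverse-knot}, so $\Phi_{K_{\mathrm{tr}}}(\tau)=\sum_{j=1}^N\Phi_{K_{Q_j}}(\tau)$. This uses that $\Phi$ depends only on the oriented knot type, since both $\Upsilon$ and $g_3$ are knot invariants. Moving $\Phi_{K_{\mathrm{tr}}}(\tau)$ to the right-hand side then yields the stated inequality for $\tau\in[0,1]\cap\mathbb Q$.

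There is no real obstacle; the points needing care are bookkeeping. First, the orientation conventions of \Cref{rem:boundary-orientations} must match the hypotheses of \Cref{lem:phi-cobordism}: the incoming pair of $(Z',F)$ must be $(-\mathbb S^3,-K_{\mathrm{tr}})$ and the outgoing pair $(\mathbb S^3,T(m_X,m_X+1))$. This is what the gluing in \Cref{cor:genus-identities} produces, since there the induced boundary orientations along $\Lambda_{s_0}\simeq\Lambda_W$ were checked to be opposite. Second, the value $g_3(T(m_X,m_X+1))=m_X(m_X-1)/2$ must be justified. Besides the plane branch argument above, it can be read off from the word surface of $(\sigma_1\cdots\sigma_{m_X-1})^{m_X+1}$ via Stallings' theorem: this surface has $m_X$ disks and $m_X^2-1$ bands, so $\chi=1-m_X(m_X-1)$.
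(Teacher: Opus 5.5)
Your proposal is correct and matches the paper's proof. Both apply the upper bound of \Cref{lem:phi-cobordism} once to $(Z',F)$ from \Cref{cor:genus-identities}, use \eqref{eq:composite-genus} and \eqref{eq:seifert-genus-identities} to reduce the coefficient to $2\delta(V,0)=m_X-M_{\boldsymbol n}$, and split $\Phi_{K_{\mathrm{tr}}}$ with the additivity \eqref{eq:phi-additivity}. Your extra justification of $g_3(T(m_X,m_X+1))=m_X(m_X-1)/2$, which the paper just states, is correct but not needed.
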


\begin{proof}
We apply the upper bound of the inequality proven in \Cref{lem:phi-cobordism} to the
knot cobordism $(Z',F)$ of \Cref{cor:genus-identities} (where we are using
the identification $\mathbb{S}_W\cong \mathbb{S}^3$). The ambient manifold
of that cobordims is orientation preserving diffeomorphic to $\mathbb{S}^3\times[0,1]$. Write
$T=T(m_X,m_X+1)$ and
$g_T=g_3(T)=m_X(m_X-1)/2$. By
\eqref{eq:composite-genus} and \eqref{eq:seifert-genus-identities},
\[
 \begin{aligned}
 \Phi_T(\tau)-\Phi_{K_{\mathrm{tr}}}(\tau)
 &\leq\bigl(g_T-g_3(K_{\mathrm{tr}})+g(F)\bigr)\tau\\
 &=\bigl[g_T-(\delta(W,0)-\delta(V,0))
       +(\delta(V,0)+\delta(W,0)-g_T)\bigr]\tau\\
 &=2\delta(V,0)\tau.
 \end{aligned}
\]
By the additivity of $\Phi$ under connected sum \eqref{eq:phi-additivity} and by
\eqref{eq:transverse-knot}, we get
$
 \Phi_{K_{\mathrm{tr}}}(\tau)
 =\sum_{j=1}^N\Phi_{K_{Q_j}}(\tau).
$
Finally, $2\delta(V,0)=m_X-M_{\boldsymbol n}$ by definition
\eqref{eq:section-invariants}, proving the statement.
\end{proof}

\subsection{Numerical estimates}

\begin{theorem}\label{thm:estimates}
One has
\begin{equation}
 M_{\boldsymbol{n}}\leq m_{\mathrm{tr}},\qquad
 m_X+M_{\boldsymbol{n}}\leq3m_{\mathrm{tr}}.
 \label{eq:knot-estimates}
\end{equation}
Equivalently,
\begin{equation}
 \mu(V)\geq m_X-m_{\mathrm{tr}}-b+1,\qquad
 \mu(V)\geq2m_X-3m_{\mathrm{tr}}-b+1.
 \label{eq:milnor-estimates}
\end{equation}
\end{theorem}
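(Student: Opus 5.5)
The plan is to evaluate \Cref{cor:upsilon-comparison} at the single parameter $\tau_0=2/m_{\mathrm{tr}}$. This is admissible because $m_{\mathrm{tr}}\geq2$ by \eqref{eq:transverse-multiplicities}, so $\tau_0\in(0,1]\cap\mathbb Q$. At $\tau_0$ the right-hand side collapses to its linear term, and the left-hand side is bounded below by elementary affine functions coming from the explicit $\Upsilon$ of the torus knot.

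\textbf{Step 1: the transverse terms vanish.} Each $K_{Q_j}$ is the knot of an irreducible plane branch of multiplicity $m_{i}\leq m_{\mathrm{tr}}$, by \eqref{eq:transverse-multiplicities}. Wang's theorem \cite[Theorem~8]{Wang} says that for the knot $K$ of a plane branch of multiplicity $m$ one has $\Upsilon_K(\tau)=-g_3(K)\tau$ for $\tau\in[0,2/m]$. In other words, $\Phi_K$ vanishes on $[0,2/m]$. Since $\tau_0=2/m_{\mathrm{tr}}\leq 2/m_i$, we get $\Phi_{K_{Q_j}}(\tau_0)=0$ for every $j$. \Cref{cor:upsilon-comparison} then becomes
\[
 \Phi_{T(m_X,m_X+1)}(\tau_0)\leq (m_X-M_{\boldsymbol n})\tau_0 .
\]

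\textbf{Step 2: the torus knot.} Write $p=m_X$. For $T(p,p+1)$ one has $g_3=p(p-1)/2$ and the known formula
\[
 \Upsilon(\tau)=-i(i+1)-\tfrac12p(p-1-2i)\tau
 \quad\text{for }\tau\in[2i/p,2(i+1)/p],\ 0\leq i<p .
\]
This formula is to be recorded as \eqref{eq:torus-phi}. Adding $g_3\tau$ gives $\Phi_T(\tau)=pi\tau-i(i+1)$ on that interval. Set $f_i(\tau)=pi\tau-i(i+1)$. Then $f_{i+1}-f_i=p\tau-2(i+1)$, so $\Phi_T(\tau)=\max_{i\geq0}f_i(\tau)$, and in particular $\Phi_T(\tau)\geq f_i(\tau)$ for every $i$. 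I would verify this maximum description carefully, since it is the only step where a sign or indexing slip could break the argument. The rest is bookkeeping.

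\textbf{Step 3: two choices of $i$.} Multiply the inequality of Step 1 by $m_{\mathrm{tr}}$ and apply the lower bound of Step 2.
\begin{itemize}
\item With $i=1$: $2m_X-2m_{\mathrm{tr}}\leq 2(m_X-M_{\boldsymbol n})$, i.e.\ $M_{\boldsymbol n}\leq m_{\mathrm{tr}}$.
\item With $i=2$: $4m_X-6m_{\mathrm{tr}}\leq 2(m_X-M_{\boldsymbol n})$, i.e.\ $m_X+M_{\boldsymbol n}\leq 3m_{\mathrm{tr}}$.
\end{itemize}
This proves \eqref{eq:knot-estimates}.

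\textbf{Step 4: the Milnor-number form.} For \eqref{eq:milnor-estimates}, substitute $M_{\boldsymbol n}=m_X-2\delta(V,0)$ from \eqref{eq:section-invariants}, together with Milnor's formula $2\delta(V,0)=\mu(V)+b-1$. This gives $M_{\boldsymbol n}=m_X-\mu(V)-b+1$. The two inequalities in \eqref{eq:knot-estimates} then rearrange exactly to $\mu(V)\geq m_X-m_{\mathrm{tr}}-b+1$ and $\mu(V)\geq 2m_X-3m_{\mathrm{tr}}-b+1$, and the steps reverse, so the two forms are equivalent.

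The main obstacle is Step 1. One must check that Wang's statement applies to every $K_{Q_j}$ with the threshold $2/m_j$, where $m_j$ is that branch's own multiplicity, and that $\tau_0$ lies in the range where both $\Upsilon$ and \Cref{lem:phi-cobordism} are used.
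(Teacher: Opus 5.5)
Your proposal is correct and follows the paper's proof step by step: evaluate \Cref{cor:upsilon-comparison} at $\tau_0=2/m_{\mathrm{tr}}$, kill the transverse terms with Wang's theorem, bound $\Phi_{T(m_X,m_X+1)}(\tau_0)$ below by the affine pieces with $i=1$ and $i=2$, and translate to Milnor numbers via $M_{\boldsymbol n}=m_X-\mu(V)-b+1$. The only difference is cosmetic: you take the maximum over all $i\geq 0$, which is valid for $\tau\leq 2$ by the unimodality you noted, whereas the paper restricts to $0\leq i\leq m_X-1$ and uses $m_X\geq m_{\boldsymbol n}^2\geq 4$ to make $i=2$ an admissible index.
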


\begin{proof}
Recall that $s_0\in\Delta_\eta\setminus\{0\}$ is a regular value of $v$,
chosen small enough and fixed in \Cref{subsec:exterior-cobordism}.
The points $Q_j=Q_j(s_0)$ for $1\leq j\leq N$ are the singular points
of $W_{s_0}$, and $K_{Q_j}$ is the link of the plane branch
$(W_{s_0},Q_j)$.
Equation~\eqref{eq:transverse-multiplicities} gives
$m_{\mathrm{tr}}\geq2$, and so we set
\begin{equation}
 \tau_0=\frac{2}{m_{\mathrm{tr}}}\in[0,1]\cap\mathbb Q.
 \label{eq:tau0}
\end{equation}
For $1\leq j\leq N$, put
$
 m_{Q_j}=\mult_{Q_j}(W_{s_0}).
$
If $Q_j\in\Sigma_i$, then \Cref{def:transverse-type} gives
$
 m_{Q_j}=m_i\leq m_{\mathrm{tr}}.
$
Since $(W_{s_0},Q_j)$ is a singular plane branch by
\Cref{prop:plane-sections}, we have that
$m_{Q_j}\geq2$ and its link $K_{Q_j}$ is an algebraic knot.
Wang's theorem \cite[Theorem~8]{Wang}, applied with
$a=m_{Q_j}$, gives
\[
 \Upsilon_{K_{Q_j}}(\tau)=-g_3(K_{Q_j})\tau
 \quad\text{for}\quad
 \tau\in\left[0,\frac{2}{m_{Q_j}}\right]\cap\mathbb Q.
\]
Equation~\eqref{eq:phi-definition} therefore gives
$\Phi_{K_{Q_j}}(\tau)=0$ on this interval.  Since
$\tau_0=2/m_{\mathrm{tr}}\leq2/m_{Q_j}$,
\begin{equation}
 \Phi_{K_{Q_j}}(\tau_0)=0
 \qquad\text{for }1\leq j\leq N.
 \label{eq:wang-vanishing}
\end{equation}
The inequality of \Cref{cor:upsilon-comparison}, together with \eqref{eq:tau0} and
\eqref{eq:wang-vanishing}, give
\begin{equation}
 \Phi_{T(m_X,m_X+1)}(\tau_0)
 \leq\frac{2(m_X-M_{\boldsymbol{n}})}{m_{\mathrm{tr}}}.
\label{eq:comparison}
\end{equation}
Now we need to bound the left-hand side of \eqref{eq:comparison}. The
torus knot $T(m_X,m_X+1)$ has Seifert genus $g_3(T)=m_X(m_X-1)/2$. Combining
\eqref{eq:phi-definition} with the computation of $\Upsilon$ in
\cite[Proposition~6.3]{OSS} gives, for
$i\in\{0,\ldots,m_X-1\}$ and
$\tau\in[2i/m_X,(2i+2)/m_X]\cap\mathbb Q$,
\begin{equation}
 \Phi_{T(m_X,m_X+1)}(\tau)=m_X\,i\tau-i(i+1).
 \label{eq:torus-phi}
\end{equation}
For $\tau\in[0,1]\cap\mathbb Q$, equation~\eqref{eq:torus-phi} implies
\begin{align}
 \Phi_{T(m_X,m_X+1)}(\tau)&\geq m_X\tau-2,
 \label{eq:torusone}\\
 \Phi_{T(m_X,m_X+1)}(\tau)&\geq2m_X\tau-6.
 \label{eq:torustwo}
\end{align}
Indeed, put $L_i(\tau)=m_X\,i\tau-i(i+1)$ for
$i=0,\ldots,m_X-1$. For
$\tau\in[2i/m_X,(2i+2)/m_X]\cap\mathbb Q$, the identity
\[
 L_{j+1}(\tau)-L_j(\tau)=m_X\tau-2(j+1)
 \qquad\text{for }j=0,\ldots,m_X-2
\]
shows that the sequence $L_0(\tau),\ldots,L_{m_X-1}(\tau)$ is
nondecreasing up to index $i$ and nonincreasing afterwards.
Hence \eqref{eq:torus-phi} gives
\[
 \Phi_{T(m_X,m_X+1)}(\tau)
 =\max_{0\leq i\leq m_X-1}\bigl\{m_X\,i\tau-i(i+1)\bigr\}
 \qquad\text{for }\tau\in[0,1]\cap\mathbb Q.
\]
Since \eqref{eq:resolution-multiplicity} gives
$m_X\geq m_{\boldsymbol n}^2\geq4$, retaining the terms
$i=1$ and $i=2$ gives \eqref{eq:torusone} and
\eqref{eq:torustwo}, respectively. Evaluating \eqref{eq:torusone} at $\tau_0$ and combining it with
\eqref{eq:comparison} gives
\[
 \frac{2m_X}{m_{\mathrm{tr}}}-2
 \leq\frac{2(m_X-M_{\boldsymbol{n}})}{m_{\mathrm{tr}}},
\]
which is equivalent to $M_{\boldsymbol{n}}\leq m_{\mathrm{tr}}$,
the first inequality in \eqref{eq:knot-estimates}.
Evaluating \eqref{eq:torustwo} at $\tau_0$ and combining it with
\eqref{eq:comparison} gives
\[
 \frac{4m_X}{m_{\mathrm{tr}}}-6
 \leq\frac{2(m_X-M_{\boldsymbol{n}})}{m_{\mathrm{tr}}},
\]
which is equivalent to
$m_X+M_{\boldsymbol{n}}\leq3m_{\mathrm{tr}}$,
the second inequality in \eqref{eq:knot-estimates}.
Milnor's formula and \eqref{eq:section-invariants} give
\[
 \mu(V)=2\delta(V,0)-b+1=m_X-M_{\boldsymbol{n}}-b+1,
\]
which translates the inequalities of \eqref{eq:knot-estimates} to the inequalities 
\eqref{eq:milnor-estimates}.
\end{proof}

\subsection{Proof of the Reduction Theorem}

\begin{proof}[Proof of \Cref{thm:reduction}]
Equations~\eqref{eq:differential} and \eqref{eq:order-n} give $m_{\boldsymbol{n}}\geq2$.
The second  estimate in \eqref{eq:knot-estimates} coming from knot theory and the
 estimate from the theory of plane curves in \Cref{prop:polar-estimate} bound the maximal
transverse multiplicity from below and above. More concretely, those yield
\[
 \frac{m_X+M_{\boldsymbol n}}{3}
 \leq m_{\mathrm{tr}}
 \leq 1+\frac{2m_X-M_{\boldsymbol n}-b}{m_{\boldsymbol n}}.
\]
Comparing the left hand side and right hand side, and multiplying by $3m_{\boldsymbol n}>0$ gives
\begin{equation}
 (m_{\boldsymbol{n}}-6)m_X+(m_{\boldsymbol{n}}+3)M_{\boldsymbol{n}}
 +3b-3m_{\boldsymbol{n}}\leq0.
 \label{eq:arithmetic}
\end{equation}
If $m_{\boldsymbol{n}}\geq6$, then the first term on the left side of
\eqref{eq:arithmetic} is nonnegative. The first  inequality in
\eqref{eq:pointbounds}, together with the fact that $b\geq1$, gives
\[
 (m_{\boldsymbol{n}}+3)M_{\boldsymbol{n}}+3b-3m_{\boldsymbol{n}}
 \geq m_{\boldsymbol{n}}^2+3>0.
\]
So the left side of \eqref{eq:arithmetic} is positive which is a contradiction.
If $m_{\boldsymbol{n}}=5$, we also use the second n inequality in
\eqref{eq:pointbounds} and get
\[
 -m_X+8M_{\boldsymbol{n}}+3b-15
 \geq3\bigl(M_{\boldsymbol{n}}+b-5\bigr)>0,
\]
because $5=m_{\boldsymbol{n}}\leq M_{\boldsymbol{n}}$ and $b\geq1$.
This again contradicts \eqref{eq:arithmetic}.
So $2\leq m_{\boldsymbol{n}}\leq4$.
\end{proof}

\printbibliography

\end{document}